\renewcommand{\d}{\mathrm d}
\newcommand{\R}{\mathbb R}
\newcommand{\Z}{\mathbb Z}
\newcommand{\wt}{\widetilde}
\renewcommand{\Re}{\operatorname{Re}}
\renewcommand{\Im}{\operatorname{Im}}
\newcommand{\Ai}{\operatorname{Ai}}
\newcommand{\C}{\mathcal{C}}
\newcommand{\D}{\mathcal{D}}
\newcommand{\id}{\mathbbm{1}}
\renewcommand{\O}{\mathcal{O}}
\renewcommand{\P}{\mathbf P}
\newcommand{\E}{\mathbf E}
\newcommand{\I}{\mathrm i}
\DeclareMathOperator*{\sgn}{sgn}
\newtheorem{proposition}{Proposition}[section]
\newtheorem{theorem}[proposition]{Theorem}
\newtheorem{lemma}[proposition]{Lemma}
\newtheorem{definition}[proposition]{Definition}
\theoremstyle{definition}
\newtheorem{remark}[proposition]{Remark}
\numberwithin{equation}{section}
\author{B\'alint Vet\H o\thanks{MTA--BME Stochastics Research Group, Egry J.\ u.\ 1, 1111 Budapest, Hungary. E-mail: {\tt vetob@math.bme.hu}}}
\title{Tracy--Widom limit of $q$-Hahn TASEP}
\date{}
\begin{document}

\maketitle
\sloppy

\begin{abstract}
We consider the $q$-Hahn TASEP which is a three-parameter family of discrete time interacting particle systems.
The particles jump to the right independently according to a certain $q$-Binomial distribution with parallel updates.
It is a generalization of the totally asymmetric simple exclusion process (TASEP) on $\Z$.
For step initial condition, we prove that the current fluctuation of $q$-Hahn TASEP at time $\tau$ is of order $\tau^{1/3}$ and asymptotically distributed as the GUE Tracy--Widom distribution.
We verify the KPZ scaling theory conjecture for the $q$-Hahn TASEP.

\ \\
\noindent {\bf Key words and phrases:} interacting particle systems, KPZ universality class, $q$-Hahn TASEP, current fluctuation, Tracy--Widom distribution

\noindent {\bf MSC classes:} 60K35, 60B20
\end{abstract}

\section{Introduction}

In the totally asymmetric simple exclusion process (TASEP) on the one-dimensional integer lattice $\Z$,
particles with vacant right neighbour jump to the right by $1$ according to independent Poisson processes with unit rate.
However it is a simple non-reversible stochastic interacting particle system, the exclusion constraint produces an interesting behaviour.
There has been a lot of studies around this model and its discrete time versions.
Due to the determinantal structures of correlation functions,
the limiting process for particle positions or for the current fluctuations were found to be given by the Airy processes~\cite{Jo03b,BFPS06,BF07,Sas05}.

For a parameter $q\in[0,1)$, the $q$-TASEP is a particle system on $\Z$ where the jumps are independent of each other and happen with rate $1-q^{\rm gap}$
where the gap is the number of consecutive vacant sites next to the particle on its right.
It reduces to TASEP for $q=0$.
The $q$-TASEP was first introduced by Borodin and Corwin in~\cite{BC11}.
For step initial condition, a Fredholm determinant formula was given in~\cite{BCF14} for the $q$-Laplace transform of the particle position.
$q$-TASEP belongs to the Kardar--Parisi--Zhang (KPZ) universality class.
Based on the formula of~\cite{BCF14}, Ferrari and Vet\H o showed in~\cite{FV14} that the large time current fluctuations are governed by the (GUE) Tracy--Widom distribution.
This result confirms the KPZ universality conjecture for \mbox{$q$-TASEP} and shows that it possesses the characteristic asymptotic fluctuation statistics of the KPZ class.
The predictions of KPZ scaling theory conjecture are proved to be true in this case, see also \cite{Spo13a}.
A technical limitation of~\cite{FV14} was removed by Barraquand in~\cite{B14b} and the analysis was extended to the case of finitely many extra slow particles.
In~\cite{BC13}, two natural discrete time versions of \mbox{$q$-TASEP} were introduced
and Fredholm determinant expressions were proved for the $q$-Laplace transform of the particle positions.
The $q$-Boson particle system introduced by Sasamoto and Wadati in~\cite{SW98b} was proved to be dual to $q$-TASEP in~\cite{BCS12} and, as a consequence,
joint moment formulas for multiple particle positions were obtained for $q$-TASEP which characterize their distribution, however they are not of Fredholm determinant form.


Povolotsky introduced a three-parameter family of discrete time particle systems on $\Z$ in~\cite{P13} which was referred to as $q$-Hahn TASEP or $(q,\mu,\nu)$-TASEP in subsequent works.
This model is solvable by the Bethe ansatz, and many known integrable stochastic particle models can be obtained as limiting cases, in particular the $q$-TASEP, see Section~\ref{s:results}.
Using the duality of the $q$-Hahn Boson process and the $q$-Hahn TASEP,
Corwin derived a Fredholm determinant formula for the $q$-Laplace transform of the particle position in $q$-Hahn TASEP with step initial condition in~\cite{C14},
the proof was recently simplified by Barraquand in~\cite{B14a}.
This formula is used as a starting point of the asymptotic analysis carried out in the present paper, see Theorem~\ref{thm:finite} below.
The spectral theory for $q$-Hahn TASEP was developed in~\cite{BCPS14},
i.e.\ the eigenfunctions of the Markov transition operator and their properties are described.
The four-parameter family of stochastic higher spin vertex models in~\cite{CP15} also includes the $q$-Hahn TASEP.
In the $q$-Hahn asymmetric exclusion process which is a related two-sided continuous-time model, a discontinuity of the particle density and
Tracy--Widom asymptotics also for the first particles were found in~\cite{BC15}.

The asymptotic analysis performed in this paper shows similarities with the one in~\cite{BCF14} and \cite{FV14}.
In all of these cases, one of the main difficulties lies in the choice of the contours for the Fredholm determinant:
they are chosen to be steep descent paths which is sufficient for the asymptotic analysis to work, but also the extra singularities of the integrand have to be controlled.
The contours that we choose in the present case are circular and they are shown on Figure~\ref{fig:functioncontours}.
Since the present analysis covers a model with three parameters, the proof of the steep descent property along the contours here is more general and parallelly also more involved than in earlier works.
It was necessary however to impose the technical conditions \eqref{munucond}--\eqref{thetacond} on the parameters of the model
which excludes the application of the present results to the discrete time geometric $q$-TASEP, see~\cite{BC13}.

The paper is organized as follows.
We introduce the $q$-Hahn TASEP model and describe the main result on the fluctuation of the particle position in Section~\ref{s:results}.
Section~\ref{s:hydroKPZ} provides a physical explanation of global behaviour of the particle position, in particular, a heuristic proof of the law of large numbers is given.
Further, the prediction of the KPZ scaling conjecture on the non-universal scale coefficient is verified.
Section~\ref{s:finite} contains the pre-asymptotic Fredholm determinant formula for $q$-Hahn TASEP which was proved in~\cite{C14}.
We also show how the main result of the paper follows from the convergence of the corresponding Fredholm determinants.
The rest of the paper is devoted to the asymptotic analysis for the full proof of the limit theorem on the particle position variable:
Section~\ref{s:analysis} contains the main steps of the analysis as propositions;
the complex contours which are suitable for asymptotics are given and proved to be of steep descent in Section~\ref{s:steep};
finally the propositions are proved in Section~\ref{s:prop}.

\section{Model and main results}\label{s:results}

We start with the definition of the $q$-Hahn TASEP with \emph{step initial condition} following \cite{P13} and with further notations.
Let $q\in(0,1)$.
The \emph{$q$-Pochhammer symbol} is given by
\begin{equation}\label{defqpochhammer}
(a;q)_n=\prod_{k=0}^{n-1}(1-aq^k)
\end{equation}
for any $a\in\mathbb C$ and $n$ integer.
The definition naturally extends to the infinite $q$-Pochhammer symbol $(a;q)_\infty$ which is meant as an infinite product.
For a fixed $q\in(0,1)$ and $0<\nu<\mu<1$ and integers $0\le j\le m$, define the weights of the $q$-Binomial distribution as
\begin{equation}\label{defvarphi}
\varphi_{q,\mu,\nu}(j|m)=\mu^j\frac{(\nu/\mu;q)_j (\mu;q)_{m-j}}{(\nu;q)_m}\frac{(q;q)_m}{(q;q)_{j}(q;q)_{m-j}}.
\end{equation}
When $m=\infty$, extend this definition by setting
\begin{equation}\label{e.jumpdistinfty}
\varphi_{q,\mu,\nu}(j|\infty)=\mu^j\frac{(\nu/\mu;q)_j (\mu;q)_\infty}{(\nu;q)_\infty}\frac1{(q;q)_j}.
\end{equation}

The $q$-Hahn TASEP is a discrete time interacting particle system on $\Z$ with parallel updates
that consists of the evolution of particles ${\bf X}(\tau)=(X_N(\tau):N\in\Z\mbox{ or }N\in\mathbb N)$ for $\tau\ge0$.
The particles are numbered from right to left.
For the $N$th particle at time $\tau$, given that the number of vacant sites to the right of it is $m=X_{N-1}(\tau)-X_N(\tau)-1$,
the particle at $X_N(\tau)$ jumps to the right by $j$ with probability $\varphi_{q,\mu,\nu}(j|m)$ independently of the others.
Jumps of different particles happen with parallel updates.
For $\nu=0$, the dynamics reduces to the geometric $q$-TASEP, see~\cite{BC13},
and by setting $\mu=(1-q)\varepsilon$ and by scaling time by $\varepsilon^{-1}$, one gets the jump rates of $q$-TASEP as $\varepsilon\to0$.

Note that the dynamics preserves the order of particles.
Step initial condition means that the particles are initially at all negative integer positions, i.e.\ there are only particles with labels $N=1,2,\dots$ and they are initially at $X_N(0)=-N$.

\begin{definition}\label{def:qdigamma}
Let
\begin{equation}\label{defqgamma}
\Gamma_q(z)=(1-q)^{1-z}\frac{(q;q)_\infty}{(q^z;q)_\infty}
\end{equation}
be the \emph{$q$-gamma function}.
Then the \emph{$q$-digamma function} is defined by
\begin{equation}\label{defqdigamma}
\Psi_q(z)=\frac\partial{\partial z}\log\Gamma_q(z).
\end{equation}
\end{definition}

\begin{definition}\label{def:kfa}
Let $q\in(0,1)$ be fixed and choose a parameter $\theta>0$.
To these values, we associate the parameters
\begin{align}
\kappa\equiv \kappa(q,\mu,\nu,\theta)&=\frac{\Psi'_q(\theta)-\Psi'_q(\theta+\log_q\nu)}{\Psi'_q(\theta+\log_q\mu)-\Psi'_q(\theta+\log_q\nu)},\label{defkappa}\\
f\equiv f(q,\mu,\nu,\theta)&=\frac{\kappa\left(\Psi_q(\theta+\log_q\mu)-\Psi_q(\theta+\log_q\nu)\right)+\Psi_q(\theta+\log_q\nu)-\Psi_q(\theta)}{\log q},\label{deff}\\
\chi\equiv\chi(q,\mu,\nu,\theta)&=\frac{\kappa\left(\Psi''_q(\theta+\log_q\mu)-\Psi''_q(\theta+\log_q\nu)\right)+\Psi''_q(\theta+\log_q\nu)-\Psi''_q(\theta)}2,\label{defchi}\\
\phi\equiv\phi(q,\mu,\nu,\theta)&=\Psi_q(\theta+\log_q\mu)-\Psi_q(\theta+\log_q\nu).\label{defphi}
\end{align}
and we denote by $\phi'$ the derivative of $\phi$ with respect to $\theta$.
\end{definition}

\begin{remark}
Explicit formulas for the quantities given above are only available in terms of the parameter $\theta$ which appears naturally in the asymptotic analysis of the problem.
It could however be physically natural to parametrize the problem by $\kappa$ since it corresponds to the macroscopic position where we focus on as explained below.
Note that
\begin{equation}\label{kappachi}
\frac{\partial\kappa}{\partial\theta}=-\frac{2\chi}{\Psi'_q(\theta+\log_q\mu)-\Psi'_q(\theta+\log_q\nu)}
\end{equation}
holds by differentiation. The numerator is positive by Theorem~\ref{thm:kpz}
(the positivity of $\chi$ is actually proved at the end of Section~\ref{s:hydroKPZ} without using the rest of Theorem~\ref{thm:kpz}).
The denominator of \eqref{kappachi} is positive, since the function $z\mapsto\Psi_q'(z)$ is decreasing.
Hence $\kappa$ depends decreasingly on $\theta$ and the parametrization by $\kappa$ is also possible.
\end{remark}

The parameters $f$ and $\kappa$ describe the global behaviour of the particle system.
We provide a physical explanation of the following law of large numbers in Section~\ref{s:hydroKPZ}.
\begin{proposition}\label{prop:lln}
The law of large numbers
\begin{equation}\label{lln}
\frac{X_N(\tau=\kappa N)}{N} \to f-1
\end{equation}
holds for the position of the $N$th particle after time $\kappa N$ as $N\to\infty$.
\end{proposition}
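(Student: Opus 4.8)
The plan is to extract the law of large numbers from the pre-asymptotic Fredholm determinant formula of Theorem~\ref{thm:finite} (stated in Section~\ref{s:finite}) by a steepest-descent / saddle-point analysis of the kernel, keeping only the leading exponential order. Recall that the $q$-Laplace transform $\E\left[\frac{1}{(\zeta q^{X_N(\tau)+N};q)_\infty}\right]$ is given by a Fredholm determinant $\det(\id+K_\zeta)$ whose kernel involves a contour integral of $\exp(\tau\, g_0(w) + \dots)$ type, where after the substitution appropriate to the $q$-Hahn weights the exponent is built from $\log$-ratios of $q$-Pochhammer symbols; differentiating these in the spectral variable produces exactly the $q$-digamma combinations in Definition~\ref{def:kfa}. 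First I would write $\tau=\kappa N$ and look for the scaling of $X_N$ for which the Fredholm determinant has a nontrivial limit: setting $X_N(\kappa N)+N \approx (f)N$ (so that $X_N/N\to f-1$), one checks that the exponent $N\, h_\theta(w)$ in the integrand has a double critical point precisely when the macroscopic position parameter equals $\kappa(q,\mu,\nu,\theta)$, and the value $f$ is forced by the condition that the $q$-Laplace transform localizes. Concretely, the function $h_\theta$ is (up to the linear term coming from $X_N$) an antiderivative of the logarithmic derivative of the integrand, whose critical-point equation $h_\theta'(w)=0$ at $w=q^\theta$-type point reproduces \eqref{defkappa} and whose value $h_\theta(w)$ at that point reproduces \eqref{deff}.

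The key steps, in order, are: (i) start from $\det(\id+K_\zeta)$ of Theorem~\ref{thm:finite} and perform the change of variables that puts the integrand in the form $e^{N h_\theta(\cdot)}$ times lower-order factors, with the critical point at a point governed by $\theta$; (ii) identify, by solving $h_\theta'=0$, that the relevant macroscopic velocity is $\kappa$ and that the correct centering for $X_N$ is $(f-1)N$; (iii) deduce that if $X_N(\kappa N)/N\to f-1+\delta$ for some $\delta\neq 0$ then the Fredholm determinant converges to $0$ (for $\delta>0$ after an appropriate choice of $\zeta$) or to $1$ (for $\delta<0$), using the steep-descent contours of Section~\ref{s:steep} to justify that only a neighbourhood of the critical point contributes; (iv) convert convergence of the $q$-Laplace transform to $1$ (resp.\ $0$) into convergence in probability of $q^{X_N+N-(\dots)}$, i.e.\ of $X_N(\kappa N)/N$, to $f-1$, via the standard fact that $a\mapsto \frac{1}{(\zeta a;q)_\infty}$ is a smooth approximation of an indicator as the scaling parameter is tuned (this is the same mechanism that underlies the reduction of the Tracy--Widom result to the Fredholm determinant limit, explained in Section~\ref{s:finite}).

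The main obstacle I anticipate is step (i)--(iii): rigorously controlling the contour integral away from the critical point so that its contribution is genuinely negligible. This requires the steep-descent property of the contours, and for a three-parameter model the verification that $\Re h_\theta$ is monotone along the chosen circular contours (and that the spurious poles of the integrand, coming from the $q$-Pochhammer factors in $\varphi_{q,\mu,\nu}$, do not obstruct the deformation) is exactly the technical heart of the asymptotic analysis; it is precisely here that the conditions \eqref{munucond}--\eqref{thetacond} enter. Since, however, this proposition is only a law of large numbers and not the fluctuation result, one needs only the \emph{leading} exponential behaviour of $h_\theta$, not its Taylor expansion to third order, so the argument can be run at a coarser level than the full Theorem~\ref{thm:kpz} analysis; in the paper's logical structure it is therefore natural to present Proposition~\ref{prop:lln} as a soft corollary of (a weak form of) the estimates established in Sections~\ref{s:analysis}--\ref{s:steep}, or alternatively to give the heuristic hydrodynamic derivation in Section~\ref{s:hydroKPZ} as the ``physical explanation'' promised in the text and defer the fully rigorous version to the Tracy--Widom proof, from which \eqref{lln} follows immediately since $\tau^{1/3}$-fluctuations around $(f-1)N$ are $o(N)$.
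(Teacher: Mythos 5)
Your fallback is exactly the paper's logical structure, but your primary route is not what the paper does, and it has a real wrinkle. The paper proves Proposition~\ref{prop:lln} in two layers: the argument actually written out (in Section~\ref{s:hydroKPZ}) is the heuristic hydrodynamic one --- assume local stationarity, use the explicit product stationary measures with gap law \eqref{defG}, compute $\rho$ and $j(\rho)$ as in Proposition~\ref{prop:stationary}, solve the conservation law \eqref{pde} along the rarefaction fan to get $p(\theta)=(f-1)/\kappa$, and then obtain the particle label $r(\theta)=1/\kappa$ by the integration-by-parts identity \eqref{rpderivative}--\eqref{pfrac}; the fully rigorous statement is then declared to follow from Theorem~\ref{thm:main}, since Tracy--Widom fluctuations of order $N^{1/3}$ around $(f-1)N$ are $o(N)$. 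That is precisely the last sentence of your proposal, so in that sense you land on the paper's approach; but you do not carry out the hydrodynamic computation that constitutes the paper's displayed proof, you only point at it.

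Concerning your primary route (a ``coarse'' steepest-descent argument directly at LLN scale from Theorem~\ref{thm:finite}): it is viable in spirit, but it is not really softer than the full analysis, for two reasons you should be aware of. First, the upper-tail bound $\P(X_N+N>(f+\varepsilon)N)\to0$ corresponds to showing $\det(\id+K_\zeta)\to0$ for the shifted centering, and a vanishing Fredholm determinant is not a consequence of steep-descent \emph{bounds} on the kernel (those only give statements of the type $\det\to1$ or $\det\to$ finite limit); in practice one extracts it from the full convergence to $F_{\rm GUE}(x)$ together with $F_{\rm GUE}(x)\to0$ as $x\to-\infty$, i.e.\ from Theorem~\ref{thm:Fredholmconv} itself. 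Second, any version of this argument still requires the contour deformations and steep-descent properties of Section~\ref{s:steep}, hence the conditions \eqref{munucond}--\eqref{thetacond}, whereas Proposition~\ref{prop:lln} is stated for the full parameter range; this mismatch is exactly why the paper pairs the heuristic derivation (valid as a physical argument for all parameters) with the rigorous deduction from Theorem~\ref{thm:main} (valid under the technical conditions), rather than giving a standalone saddle-point proof of the law of large numbers. A minor imprecision in your step (ii): at the correct centering the point $q^\theta$ is a \emph{double} critical point of $f_0$, and it is the pair of equations $f_0'(q^\theta)=0$, $f_0''(q^\theta)=0$ that produces \eqref{defkappa}--\eqref{deff}, not the critical equation plus the critical \emph{value}.
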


In order to visualize the macroscopic behaviour given above, consider the evolution of the points $(X_N(\tau)+N,N)$ in the coordinate system.
For $\tau=0$, these points all lie on the positive half of the vertical axis.
For $\tau$ large and after rescaling the picture by $\tau$, the points are macroscopically around $(f/\kappa,1/\kappa)$
which is a curve that can be parametrized by $\theta$ and it is shown on Figure~\ref{fig:macro}.
By computing limits using \eqref{defkappa}--\eqref{deff}, one can see that the curve $(f/\kappa,1/\kappa)$ touches the axes
at $((\Psi_q(\log_q\mu)-\Psi_q(\log_q\nu))/\log q,0)$ for $\theta\to0$ and at $(0,(\mu-\nu)/(1-\nu))$ for $\theta\to\infty$.
It means that the right-most $q$-Hahn TASEP particle has speed $(\Psi_q(\log_q\mu)-\Psi_q(\log_q\nu))/\log q$
and that the left-most particle which has already started moving after time $\tau$ is around the position $-(\mu-\nu)\tau/(1-\nu)$.

\begin{figure}
\centering
\includegraphics[width=200pt]{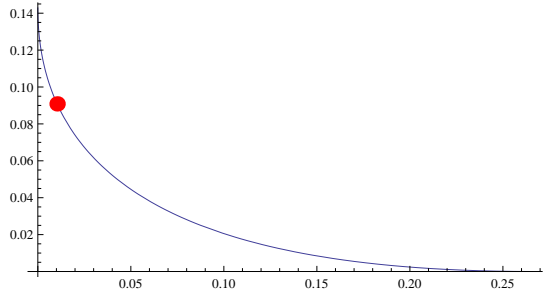}
\caption{The macroscopic shape of the positions of $q$-Hahn TASEP particles for $q=0.2,\mu=0.4,\nu=0.3$ which is given by the blue parametric curve $(f/\kappa,1/\kappa)$.
Due to the technical condition \eqref{thetacond}, Theorem~\ref{thm:main} is valid for positions which are on the right of the red dot on the plot,
i.e.\ for the major part of the rarefaction fan for these values of the parameters.\label{fig:macro}}
\end{figure}

In this paper, we study the fluctuations of particle $X_N$ around the deterministic macroscopic position given by \eqref{lln}.
One expects by KPZ universality that these fluctuations are of order $\O(N^{1/3})$ and have Tracy--Widom statistics (see the review~\cite{Fer10b}).
Further, at a given time $\tau=\kappa N$, particles are correlated over a scale $\O(N^{2/3})$ and their limit process is the Airy$_2$ process.
It is also expected by~\cite{CFP10b} that the same limit process arises for the position $X_N$ at times of order $N^{2/3}$ away from $\kappa N$ as it was shown for TASEP in~\cite{SI07}.

Therefore it is natural to consider for any $c\in\R$ the scaling
\begin{align}
\tau(N,c)&=\kappa N+cN^{2/3},\label{deftau}\\
p(N,c)&=(f-1)N-\frac{c\phi}{\log q}N^{2/3}+\frac{c^2(\phi')^2}{4\chi\log q}N^{1/3}\label{defp}
\end{align}
with $\kappa$, $f$, $\chi$, $\phi$ and $\phi'$ given in Definition~\ref{def:kfa}.
It means that on the top of the macroscopic behaviour given by \eqref{lln} and governed by the parameter $\theta$ through $\kappa$ and $f$,
we allow for a smaller $N^{2/3}$ time scale on which the parameter $c$ in \eqref{deftau} is understood as the time parameter of the expected Airy$_2$ process scaling limit.
Hence the rescaled tagged particle position given by
\begin{equation}\label{defxi}
\xi_N=\frac{X_N(\tau(N,c))-p(N,c)}{\chi^{1/3}(\log q)^{-1}N^{1/3}}
\end{equation}
is expected to converge to the Airy$_2$ process as a process in $c$.
Our main result is the convergence of the one-point distribution of $\xi_N$ to the Tracy--Widom distribution function~\cite{TW94}.

For our proof to work, we have to assume that for the parameters of the $q$-Hahn TASEP the technical conditions
\begin{gather}
q\le\nu<\mu\le1/2,\label{munucond}\\
\theta<\log_q\frac{2q}{1+q}\label{thetacond}
\end{gather}
hold.
It is shown on Figure~\ref{fig:macro} which part of the rarefaction fan is covered by the condition \eqref{thetacond} for a certain choice of parameters.
The upper bound \eqref{thetacond} on $\theta$ is plotted as a function of $q$ on Figure~\ref{fig:thetabound}.

\begin{figure}
\centering
\includegraphics[width=200pt]{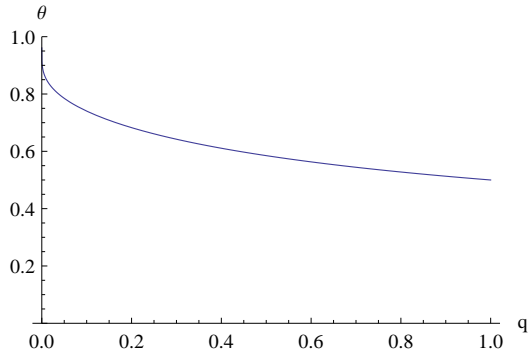}
\caption{The upper bound \eqref{thetacond} on $\theta$ as a function of $q$.\label{fig:thetabound}}
\end{figure}

\begin{theorem}\label{thm:main}
Let $q\in(0,1)$ and $\theta>0$ be fixed and suppose that the conditions \eqref{munucond}--\eqref{thetacond} hold.
For any $c,x\in\R$ and with the notation above, the rescaled position $\xi_N$ converges in distribution, i.e.
\begin{equation}
\lim_{N\to\infty}\P(\xi_N<x)=F_{\rm GUE}(x)
\end{equation}
where $F_{\rm GUE}$ is the GUE Tracy--Widom distribution function.
\end{theorem}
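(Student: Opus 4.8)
The plan is to deduce Theorem~\ref{thm:main} from the pre-asymptotic Fredholm determinant formula for $q$-Hahn TASEP (Theorem~\ref{thm:finite}, due to~\cite{C14}), which expresses the $q$-Laplace transform $\E\bigl[1/(\zeta q^{X_N(\tau)+N};q)_\infty\bigr]$ as a Fredholm determinant $\det(\id+K_\zeta)$ on a suitable contour. The first step is the reduction from this $q$-deformed transform to the genuine distribution function of $\xi_N$. Since $q\in(0,1)$, the map $a\mapsto 1/(\zeta q^a;q)_\infty$ with $\zeta<0$ is a bounded, essentially monotone approximation of the indicator $\id[a\ge0]$, concentrated on the scale of a few lattice units; choosing $\zeta=\zeta(N,x,c)$ with $\log_q(-\zeta)\approx -p(N,c)-N-x\chi^{1/3}(\log q)^{-1}N^{1/3}$ and running the sandwiching argument of~\cite{BCF14,FV14} (bound $\id[\xi_N<x\mp\varepsilon]$ above and below by $q$-exponential observables, then use continuity of $F_{\rm GUE}$) reduces everything to the convergence
\[
\lim_{N\to\infty}\det(\id+K_{\zeta(N,x,c)})=F_{\rm GUE}(x)=\det(\id-K_{\rm Ai})_{L^2(x,\infty)},
\]
$K_{\rm Ai}$ being the Airy kernel. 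This reduction is carried out in Section~\ref{s:finite}; the rest is the asymptotic analysis of the right-hand side.

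Next I would put the kernel in exponential form. After substituting the scalings \eqref{deftau}--\eqref{defp} and the above $\zeta$, the Mellin--Barnes integrand defining $K_\zeta$ splits as $\exp\bigl(N F_\theta(w)-N F_\theta(w')\bigr)$ times lower-order factors of sizes $N^{2/3}$ (carrying the time shift $c$) and $N^{1/3}$ (carrying $x$ and $c^2$), where $F_\theta$ is assembled from logarithms of infinite $q$-Pochhammer symbols. By the very definitions in Definition~\ref{def:kfa}, $F_\theta$ has a double critical point at some $w_c=w_c(\theta)$ (of the form $q^\theta$ in suitable coordinates): $F_\theta'(w_c)=F_\theta''(w_c)=0$ while $F_\theta'''(w_c)\neq0$, with $\chi$ proportional to $F_\theta'''(w_c)$. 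Here $f$ matches the $\O(N)$ term (hence Proposition~\ref{prop:lln}), $\phi$ governs the coupling between the $cN^{2/3}$ time increment and the position, and the $N^{1/3}$ term $\frac{c^2(\phi')^2}{4\chi\log q}$ in \eqref{defp} is precisely the completion of the square that cancels the spurious $c$-linear contribution at the rescaled critical point. One then deforms the $w$- and $w'$-contours (equivalently the Mellin--Barnes $s$-contour) to the circular contours of Figure~\ref{fig:functioncontours}, passing through $w_c$.

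The technical heart is Section~\ref{s:steep}: proving that these circular contours are \emph{steep descent} for $\Re F_\theta$, i.e.\ $\Re\bigl(F_\theta(w)-F_\theta(w_c)\bigr)$ attains its extreme value on each contour only at $w_c$ and decays away from it, while at the same time the contours are positioned so as to avoid every singularity of the integrand --- the poles of the $q$-Pochhammer factors at $w\in\{q^{-k}/\mu,\,q^{-k}/\nu:k\ge0\}$, the poles of $\pi/\sin(\pi s)$, and the pole from the factor $(q^sw-w')^{-1}$. I expect this to be the main obstacle: with three free model parameters the monotonicity estimates along the circle are far more delicate than in~\cite{BCF14,FV14}, and the conditions \eqref{munucond}--\eqref{thetacond} are exactly what is needed to keep the singularities on the correct side of the steep-descent circles while preserving the required descent property.

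Finally (Sections~\ref{s:analysis} and~\ref{s:prop}), the steep descent estimates localize the Fredholm determinant: the contribution from outside an $N^{-1/3}$-neighbourhood of $w_c$ is exponentially small, so one restricts to $w=w_c\bigl(1+\O(N^{-1/3})\bigr)$, Taylor-expands $N\bigl(F_\theta(w)-F_\theta(w_c)\bigr)\approx\frac\chi3\tilde w^3$ up to the explicit $x$-linear term produced by $\zeta$, changes variables, and uses the integral representation $\Ai(a)=\frac1{2\pi\I}\int e^{z^3/3-az}\,\d z$ to identify the limit of the rescaled kernel as $K_{\rm Ai}$ on $L^2(x,\infty)$, the $c$-dependence dropping out by the choice of $p(N,c)$. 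Uniform Hadamard-type bounds on the rescaled kernel (again a consequence of the steep descent estimates, furnishing an integrable Gaussian-type dominating function) justify passing to the limit term by term in the Fredholm series, giving $\det(\id+K_\zeta)\to\det(\id-K_{\rm Ai})_{L^2(x,\infty)}=F_{\rm GUE}(x)$ and completing the proof.
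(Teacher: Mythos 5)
Your proposal is correct and follows essentially the same route as the paper: reduce via Corwin's Fredholm determinant formula and the $q$-Laplace transform/indicator sandwiching argument of \cite{FV14} and \cite{BC11} to the convergence of $\det(\id+K_\zeta)$, then perform a steepest descent analysis with circular steep descent contours through the double critical point $q^\theta$ under conditions \eqref{munucond}--\eqref{thetacond}, localize, Taylor-expand to the cubic $\frac\chi3(Z-\theta)^3$ plus lower-order $c$- and $x$-terms, and identify the limiting kernel with the Airy kernel, yielding $F_{\rm GUE}(x)$. This matches the paper's decomposition into Theorem~\ref{thm:Fredholmconv} and Propositions~\ref{prop:localization}--\ref{prop:rewritekernel} together with the steep descent results of Section~\ref{s:steep}.
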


\begin{remark}
Condition \eqref{munucond} is needed for the proof of Proposition~\ref{prop:steep1} and \ref{prop:steep2}
to establish the steep descent property along the contours $\C_\theta$ and $\D_\theta$.
The origin of this condition is more explained in Remark~\ref{rem:munucond}.
We expect that this condition is purely technical,
because the predictions of the KPZ scaling conjecture (explained in Section~\ref{s:hydroKPZ} in details) are valid in the full parameter range.

Condition \eqref{thetacond} is already used in the first step of the proof of Proposition~\ref{prop:localization} in the contour deformation
in order to make sure that no poles coming from the sine in the denominator of the integrand in the kernel $K_x$ given by \eqref{defKx} have to be encountered.
Theorem~\ref{thm:main} is expected to hold in the entire rarefaction fan without the technical limitation \eqref{thetacond}.
To eliminate this condition, one should control the blow up of the kernel at the additional poles by futher bounds on the kernel.
The restriction of \eqref{thetacond} is shown on Figure~\ref{fig:thetabound}.
Note that \eqref{thetacond} is in particular satisfied for $\theta<1/2$ and for any $q\in(0,1)$.
\end{remark}

\begin{remark}
An equivalent statement of Theorem~\ref{thm:main} is that if one expresses the particle position as
\begin{equation}\label{limthmwithtau}
X_{N(\tau,c)}(\tau)=P(\tau,c)+\frac{\chi^{1/3}}{\kappa^{1/3}\log q}\xi_\tau \tau^{1/3},
\end{equation}
where
\begin{align*}
N(\tau,c)&=\frac\tau\kappa-\frac{c\tau^{2/3}}{\kappa^{5/3}}+\frac{2c^2\tau^{1/3}}{3\kappa^{7/3}},\\
P(\tau,c)&=\frac{f-1}\kappa \tau-c\left(\frac{f-1}{\kappa^{5/3}}+\frac\phi{\kappa^{2/3}\log q}\right)\tau^{2/3}\\
&\qquad+c^2\left(\frac{2(f-1)}{3\kappa^{7/3}}+\frac{2\phi}{3\kappa^{4/3}\log q}+\frac{(\phi')^2}{4\chi\kappa^{1/3}\log q}\right)\tau^{1/3},
\end{align*}
then for any $x\in\R$,
\[\lim_{\tau\to\infty}\P(\xi_\tau<x)=F_{\rm GUE}(x).\]
\end{remark}

To state the next equivalent formulation of our main result, we introduce the height function $h(j,\tau)$ via the height differences
$h(j+1,\tau)=h(j,\tau)=-1$ if there is a particle at position $j$ at time $\tau$ and $+1$ if the site is vacant.
This defines the height function up to a global shift which is determined by its initial value $h(j,0)=|j|$.
It corresponds to the step (or wedge) initial condition.
We remark that the height function can also be interpreted as particle current:
the number of particles in the interval $[j,\infty)$ at time $\tau$ is given by $(h(j,\tau)-j)/2$.

\begin{theorem}\label{thm:kpz}
Let $q\in(0,1)$ and $\theta>0$ be fixed and suppose that the conditions \eqref{munucond}--\eqref{thetacond} hold.
With the notation introduced above, we can write the height fluctuation as
\begin{equation}\label{hfluct}
h\left(\frac{f-1}\kappa \tau,\tau\right)=\frac{f+1}\kappa \tau+\frac2{\log q+\Psi_q(\theta)-\Psi_q(\theta+\log_q\nu)}\frac{\chi^{1/3}}{\kappa^{1/3}}\xi_\tau\tau^{1/3}
\end{equation}
with
\[\lim_{\tau\to\infty}\P(\xi_\tau<x)=F_{\rm GUE}(x)\]
for any $x\in\R$.

Furthermore, this verifies the prediction of the KPZ scaling theory conjecture on the non-universal scale coefficient of the $\xi_\tau$.
In particular, this coefficient is negative due to the fact that $\chi>0$ for any choice of $q\in(0,1)$ and $0<\nu<\mu<1$.
\end{theorem}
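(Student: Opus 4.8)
The plan is to read \eqref{hfluct} off Theorem~\ref{thm:main} through the dictionary between the height function and the particle current, and to import from Section~\ref{s:hydroKPZ} the identification of the non-universal constant with the KPZ-scaling prediction together with the positivity of $\chi$. Recall from the paragraph preceding the statement that $h(j,\tau)=j+2\,\#\{N:X_N(\tau)\ge j\}$, so that for every integer $m$ one has the exact set identity $\{h(j,\tau)\ge j+2m\}=\{X_m(\tau)\ge j\}$. I would fix the spatial point $j=\frac{f-1}\kappa\tau$. By the law of large numbers \eqref{lln} the particle with label $\tau/\kappa$ lies macroscopically at $j$, so $\#\{N:X_N(\tau)\ge j\}$ concentrates on $\tau/\kappa$ and $h(j,\tau)$ on $\frac{f+1}\kappa\tau$, which is the deterministic term of \eqref{hfluct}.

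For the fluctuations I would read the distribution of $h(j,\tau)$ at the level $j+2m$ with $m=\frac\tau\kappa+\tfrac A2\tau^{1/3}s$, where $A$ is the prefactor appearing in \eqref{hfluct}; up to a negligible integer rounding of $m$ the event $\{h(j,\tau)\ge j+2m\}$ is that the rescaled height exceeds $s$, and $\P(h(j,\tau)\ge j+2m)=\P(X_m(\tau)\ge j)$. I would then apply Theorem~\ref{thm:main} at label $m$ and time $\tau$: the required value of the auxiliary parameter is $c=c_\tau$ determined by $\tau(m,c_\tau)=\tau$, i.e.\ $c_\tau m^{2/3}=\tau-\kappa m=-\tfrac{\kappa A}2\tau^{1/3}s$, so that $c_\tau=O(\tau^{-1/3})\to0$. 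Inserting this into \eqref{defxi}--\eqref{defp} gives $\P(X_m(\tau)\ge j)=\P(\xi_m\le w_\tau)$ with $w_\tau=(j-p(m,c_\tau))/(\chi^{1/3}(\log q)^{-1}m^{1/3})$, and a computation --- in which the term $-\tfrac{c\phi}{\log q}m^{2/3}$ of \eqref{defp} contributes at order $\tau^{1/3}$ and must be kept, while the $c^2$ term drops --- shows $w_\tau\to s$, hence $\P(X_m(\tau)\ge j)\to F_{\rm GUE}(s)$. Two facts make the constants coincide with those of \eqref{hfluct}: first, the denominator $\log q+\Psi_q(\theta)-\Psi_q(\theta+\log_q\nu)$ of the prefactor in \eqref{hfluct} equals $-\log q$ times the macroscopic slope of the curve $N\mapsto X_N(\tau)$ at the ray considered, which by differentiating \eqref{deff} and inserting $\kappa\phi'=\Psi'_q(\theta)-\Psi'_q(\theta+\log_q\nu)$ --- i.e.\ \eqref{defkappa} rewritten --- gives $f'\log q=\kappa'\phi$ and hence the slope $(f-1)-\kappa\phi/\log q$; second, converting a fluctuation of $X_m$ at fixed label into a fluctuation of $\#\{N:X_N(\tau)\ge j\}$ divides by that slope, and passing from the current to the height multiplies by $2$, which combined with the particle-position scale $\chi^{1/3}(\log q)^{-1}m^{1/3}$ of \eqref{defxi} produces exactly the prefactor of \eqref{hfluct}. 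Translating back through the identity of the first paragraph yields \eqref{hfluct}.

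The remaining assertions of the theorem are established in Section~\ref{s:hydroKPZ}: there the KPZ scaling theory prediction for $q$-Hahn TASEP is written out --- the non-universal constant assembled from the macroscopic flux and the variance of the current under the stationary product $q$-Hahn measures is computed for the three-parameter family and shown to equal the prefactor in \eqref{hfluct} --- and the strict inequality $\chi>0$ is proved there for all $q\in(0,1)$ and $0<\nu<\mu<1$ without invoking the present theorem. Granting $\chi>0$, the prefactor of \eqref{hfluct} is negative, since $\kappa>0$ while $\log q+\Psi_q(\theta)-\Psi_q(\theta+\log_q\nu)<0$ ($\log q<0$, $\log_q\nu>0$, and $\Psi_q$ is increasing). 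I expect the main difficulty to be the rigorous version of the second paragraph: one must know that the convergence in Theorem~\ref{thm:main} is locally uniform in $c$ near $0$ --- which is what the Fredholm-determinant asymptotics of the subsequent sections deliver --- with the $o(\tau^{1/3})$ errors controlled uniformly enough to allow $c_\tau\to0$; beyond that, the real work is the stationary-measure computation of Section~\ref{s:hydroKPZ} and the algebraic identities tying it to $\chi,\kappa,f,\phi$, everything else being bookkeeping.
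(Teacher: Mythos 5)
Your derivation of \eqref{hfluct} rests on the same mechanism as the paper's proof, namely converting the position fluctuation into a label (hence height) fluctuation through the macroscopic slope, i.e.\ the identity $(f-1)\log q-\kappa\phi=-(\log q+\Psi_q(\theta)-\Psi_q(\theta+\log_q\nu))$, equivalently $\rho=\log q/a$; the paper does this by keeping $c=0$ in \eqref{limthmsimple} and trading the $\tau^{1/3}$ fluctuation for a perturbation of $\theta$, while you instead apply Theorem~\ref{thm:main} at a shifted label $m=\tau/\kappa+O(\tau^{1/3})$ with $c_\tau\to 0$. Two points make your implementation problematic. First, as you acknowledge, Theorem~\ref{thm:main} is stated for fixed $c$, and your argument needs it for $c=c_\tau$ depending on $N$; the paper's route avoids this issue entirely. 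Second, and more concretely, your asserted computation ``$w_\tau\to s$'' does not close with \eqref{defp} as printed: eliminating $c_\tau$ via $c_\tau m^{2/3}=\tau-\kappa m$ gives $p(m,c_\tau)=\frac{f-1}{\kappa}\tau+\bigl(m-\frac\tau\kappa\bigr)\bigl((f-1)+\frac{\kappa\phi}{\log q}\bigr)+O(\tau^{-1/3})$, so the effective slope produced by \eqref{defp} at fixed time is $(f-1)+\kappa\phi/\log q$, not the limit-shape slope $(f-1)-\kappa\phi/\log q$ that your identity supplies, and the limit of $w_\tau$ comes out as $\bigl(1-2\kappa\phi/a\bigr)s$ rather than $s$. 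The cancellation you invoke does hold if the $c\phi$-term in \eqref{defp} (equivalently in \eqref{deff1}) carries the opposite sign --- which is what consistency with the Taylor expansion \eqref{f1Taylor} requires, since with \eqref{deff1} as printed the linear coefficient at $\theta$ is $2c\phi\neq0$ --- so your computation matches the intended scaling but not the printed one; in any case, as written this key step is asserted rather than verified, and the paper's $c=0$ argument sidesteps the whole $N^{2/3}$-order sign bookkeeping.

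The second half of the theorem is genuinely missing from your proposal. The verification of the KPZ scaling prediction and the positivity of $\chi$ are not ``established in Section~\ref{s:hydroKPZ}'' as independent facts you may cite: in the paper those computations --- evaluating $\lambda=\frac12\,\d^2 j/\d\rho^2$ and $A$ from the stationary gap distribution, checking $-\frac12\lambda A^2=-8\chi/(a^3\kappa)$ against the prefactor of \eqref{hfluct}, and proving $\chi>0$ via Cauchy's mean value theorem together with the monotonicity of $\Psi_q'''/\Psi_q''$ --- constitute precisely the proof of Theorem~\ref{thm:kpz}, so deferring to that section is circular. To complete your proof you would have to carry out the stationary-measure computation yourself (Proposition~\ref{prop:stationary} supplies $\rho$ and $j$, but the differentiations and the algebra tying $\lambda$ and $A$ to $\chi$, $\kappa$ and $a$ remain to be done) and give an actual argument for $\chi>0$; only the final step, that $\chi>0$ forces the prefactor of \eqref{hfluct} to be negative, is argued correctly in your proposal.
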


\section{Hydrodynamic limit and KPZ scaling conjecture}\label{s:hydroKPZ}

In this section, we first give a non-rigorous argument about the hydrodynamic limit of $q$-Hahn TASEP, more precisely, about the particle density
\[\rho(t,x)=\lim_{\tau\to\infty}\P(\mbox{there is a particle at position $x\tau$ at time $t\tau$})\]
where the limit is expected to exist.
In particular, we show that if the local stationarity assumption is satisfied, then the law of large numbers stated in Proposition~\ref{prop:lln} holds.
This computation is based on the mass conservation of particles.
Later, in Section~\ref{s:analysis}, we prove Theorem~\ref{thm:main} using the steepest descent method,
which implies the law of large numbers without the local stationarity assumption.
Further, the KPZ scaling theory provides a prediction on the non-universal scale coefficient.
We show in this section that the predictions coincide with the coefficients which appear in Theorem~\ref{thm:main} and~\ref{thm:kpz}.

It was shown in~\cite{EMZ04} that the $q$-Hahn TASEP admits a family of translation invariant stationary measures parametrized by $\alpha\in(0,1)$
where the gaps between consecutive particles are i.i.d.\ random variables with distribution
\begin{equation}\label{defG}
\P(G=k)=\frac{(\alpha;q)_\infty}{(\alpha\nu;q)_\infty}\frac{(\nu;q)_k}{(q;q)_k}\alpha^k
\end{equation}
for $k=0,1,2,\dots$ which is a proper probability distribution by the $q$-Binomial theorem
\begin{equation}\label{qbinom}
\sum_{n=0}^\infty\frac{(a;q)_n}{(q;q)_n}z^n=\frac{(az;q)_\infty}{(z;q)_\infty}.
\end{equation}
For the stationary $q$-Hahn TASEP, the particle density is clearly a constant $\rho(t,x)\equiv\rho$, its value is given in Proposition~\ref{prop:stationary} below.
The average particle current $j(\rho)$ in the stationary $q$-Hahn TASEP is defined as the probability that a bond of $\Z$ is jumped over by a particle in one time step.
This definition does not depend on the choice of the bond, it is a function of the parameter $\alpha$ of the stationary measure \eqref{defG},
hence by Remark~\ref{rem:rhoinverse}, it is a function of the density $\rho$.

\begin{proposition}\label{prop:stationary}
The stationary measure of $q$-Hahn TASEP with parameter $\alpha$ has particle density and particle current given by
\begin{equation}\label{defrhojrho}
\rho=\frac{\log q}{\log q+\Psi_q(\log_q\alpha)-\Psi_q(\log_q(\alpha\nu))},\quad
j(\rho)=\frac{\Psi_q(\log_q(\alpha\mu))-\Psi_q(\log_q(\alpha\nu))}{\log q+\Psi_q(\log_q\alpha)-\Psi_q(\log_q(\alpha\nu))}.
\end{equation}
\end{proposition}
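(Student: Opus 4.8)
\emph{Density.} I would handle the density and the current separately; the $q$-binomial theorem \eqref{qbinom} and the definition of $\Psi_q$ do all the algebraic work. Under the stationary measure of \cite{EMZ04} the gaps between consecutive particles are i.i.d.\ copies of the variable $G$ with law \eqref{defG}, so the density equals $\rho=(1+\E G)^{-1}$ and it remains to compute $\E G$. Taking $a=\nu$, $z=\alpha$ in \eqref{qbinom} gives $\sum_{k\ge0}\frac{(\nu;q)_k}{(q;q)_k}\alpha^k=\frac{(\alpha\nu;q)_\infty}{(\alpha;q)_\infty}$ (which in particular confirms that \eqref{defG} is a probability distribution); differentiating this series in $\alpha$ termwise, which is legitimate since $\alpha\in(0,1)$, yields $\E G=\alpha\,\partial_\alpha\log\frac{(\alpha\nu;q)_\infty}{(\alpha;q)_\infty}$. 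From \eqref{defqgamma} one has $\log(q^z;q)_\infty=(1-z)\log(1-q)+\log(q;q)_\infty-\log\Gamma_q(z)$, hence $\partial_z\log(q^z;q)_\infty=-\log(1-q)-\Psi_q(z)$ by \eqref{defqdigamma}. Combining this with the chain rule $\alpha\,\partial_\alpha=(\log q)^{-1}\partial_{\log_q\alpha}$ and writing $\alpha=q^{\log_q\alpha}$, $\alpha\nu=q^{\log_q(\alpha\nu)}$ gives
\[
\E G=\frac{\Psi_q(\log_q\alpha)-\Psi_q(\log_q(\alpha\nu))}{\log q},
\]
so that $\rho=(1+\E G)^{-1}$ produces the formula for $\rho$ in \eqref{defrhojrho}.

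\emph{Current.} The starting point is the identity $j(\rho)=\rho\bar v$, where $\bar v$ is the expected one-step displacement of a tagged particle under the stationary measure (independent of the choice of particle by translation invariance). Indeed, particles keep their order and a particle with $m$ empty sites to its right jumps by at most $m$, so at most one particle can cross a given bond in one step; hence $j(\rho)$, defined as the probability that a fixed bond is jumped over, equals the expected number of particles crossing that bond. Summing crossings over all bonds in $[-L,L]$ equals the total one-step displacement of the particles lying in $[-L,L]$ up to an $\O(1)$ boundary correction; dividing by $2L+1$ and letting $L\to\infty$, the ergodic theorem for the translation-invariant stationary dynamics identifies the left-hand side with $j(\rho)$ and the right-hand side with $\rho\bar v$. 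Under the stationary measure the gap $m$ to the right of the tagged particle is distributed as $G$ and, conditionally on $m$, its jump has law $\varphi_{q,\mu,\nu}(\cdot\,|\,m)$ from \eqref{defvarphi}, so $\bar v$ is simply the mean of the jump size.

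\emph{Mean jump.} I would obtain the marginal law of the jump size by summing over $m$: after the substitution $n=m-j$ and one more use of \eqref{qbinom} (with $a=\mu$, $z=\alpha$, giving $\sum_{n\ge0}\frac{(\mu;q)_n}{(q;q)_n}\alpha^n=\frac{(\alpha\mu;q)_\infty}{(\alpha;q)_\infty}$), the $(\nu;q)_m$ factors cancel and one is left with
\[
\P(\text{jump}=j)=\frac{(\alpha\mu;q)_\infty}{(\alpha\nu;q)_\infty}\,\frac{(\nu/\mu;q)_j}{(q;q)_j}\,(\alpha\mu)^j,
\]
a genuine probability distribution (all factors are positive because $0<\nu/\mu<1$). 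This is precisely \eqref{defG} with $\alpha$ replaced by $\alpha\mu$ and $\nu$ by $\nu/\mu$ — note that $\alpha\mu\cdot(\nu/\mu)=\alpha\nu$ — so the density computation above applies verbatim and gives $\bar v=\frac{\Psi_q(\log_q(\alpha\mu))-\Psi_q(\log_q(\alpha\nu))}{\log q}$. Multiplying by $\rho$, the two factors $\log q$ cancel and we get the claimed expression for $j(\rho)$ in \eqref{defrhojrho}.

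The only step that is more than routine manipulation of the $q$-binomial theorem is the justification of $j(\rho)=\rho\bar v$, i.e.\ tying the bond current to the Palm-mean displacement of a particle; the termwise differentiation of the $q$-binomial series is immediate from uniform convergence on compact subsets of $\{|z|<1\}$ together with $\alpha,\alpha\mu\in(0,1)$.
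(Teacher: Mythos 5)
Your proposal is correct and follows essentially the same route as the paper: $\rho=(1+\E G)^{-1}$ with $\E G$ computed by differentiating the $q$-binomial series and identifying $q$-digamma functions via \eqref{defqgamma}--\eqref{defqdigamma}, and $j(\rho)=\rho\cdot(\text{mean jump})$ with the mean jump again reduced to the $q$-binomial theorem. The only differences are cosmetic and harmless: you first marginalize the jump law over the gap (obtaining a distribution of the same form as \eqref{defG} with $\alpha\mapsto\alpha\mu$, $\nu\mapsto\nu/\mu$, so the density computation applies verbatim) where the paper manipulates the double sum directly, and you spell out the bond-crossing/ergodic justification of $j(\rho)=\rho\bar v$, which the paper takes as immediate from its definition of the current.
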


\begin{remark}\label{rem:rhoinverse}
Since the function $z\mapsto\Psi_q(z)$ is increasing for $z>0$, the numerators and denominators in \eqref{defrhojrho} are all negative.
Note that the function $z\mapsto\Psi_q(z)-\Psi_q(z+\log_q\nu)$ is also increasing for $z>0$ since $\log_q\nu>0$.
As a consequence, for fixed $q$, $\mu$ and $\nu$, the formula for $\rho$ in \eqref{defrhojrho} is a decreasing function of $\alpha$.
Using the inverse of this function, the average particle current can be understood as a function of the particle density $\rho$.
\end{remark}

Since the number of particles in $q$-Hahn TASEP is a conserved quantity, the following mass conservation equation has to be satisfied provided that the particle density is well-defined:
\begin{equation}\label{pde}
\frac\partial{\partial t}\rho(t,x)+\frac\partial{\partial x}j(\rho(t,x))=0.
\end{equation}
If the $q$-Hahn TASEP is started from step initial condition, then the corresponding initial condition for \eqref{pde} is $\rho(0,x)=\id(x<0)$.

\begin{proof}[Proof of Proposition~\ref{prop:stationary}]
Since in the stationary $q$-Hahn TASEP the gaps between particles are i.i.d.\ and distributed as $G$ given in \eqref{defG} with some fixed $\alpha\in(0,1)$, the renewal theorem implies that
\begin{equation}\label{rhoG}
\rho=\frac1{1+\E(G)}.
\end{equation}
A direct computation yields
\begin{align*}
\E(G)&=\frac{(\alpha;q)_\infty}{(\alpha\nu;q)_\infty}\sum_{k=0}^\infty k\frac{(\nu;q)_k}{(q;q)_k}\alpha^k\\
&=\frac{(\alpha;q)_\infty}{(\alpha\nu;q)_\infty}\alpha\frac\partial{\partial\alpha}\frac{(\alpha\nu;q)_\infty}{(\alpha;q)_\infty}\\
&=\alpha\frac\partial{\partial\alpha}\log\frac{(\alpha\nu;q)_\infty}{(\alpha;q)_\infty}\\
&=\frac{\Psi_q(\log_q\alpha)-\Psi_q(\log_q(\alpha\nu))}{\log q}
\end{align*}
which together with \eqref{rhoG} proves the formula for $\rho$.
The average particle current is the product of the particle density $\rho$ and the expected jump size.
The latter is equal to
\begin{align*}
\sum_{n=0}^\infty\P(G=n)\sum_{m=0}^n m\varphi(m|n)&=\sum_{m=0}^\infty\sum_{k=0}^\infty\P(G=m+k)m\varphi(m|m+k)\\
&=\frac{(\alpha;q)_\infty}{(\alpha\nu;q)_\infty}\sum_{m=0}^\infty m\mu^m\frac{(\nu/\mu;q)_m}{(q;q)_m}\sum_{k=0}^\infty \alpha^{m+k}\frac{(\mu;q)_k}{(q;q)_k}\\
&=\frac{(\alpha\mu;q)_\infty}{(\alpha\nu;q)_\infty}\alpha\frac\partial{\partial\alpha}\sum_{m=0}^\infty(\alpha\mu)^m\frac{(\nu/\mu;q)_m}{(q;q)_m}\\
&=\alpha\frac\partial{\partial\alpha}\sum_{m=0}^\infty\log\frac{(\alpha\nu;q)_\infty}{(\alpha\mu;q)_\infty}\\
&=\frac{\Psi_q(\log_q(\alpha\mu))-\Psi_q(\log_q(\alpha\nu))}{\log q}
\end{align*}
where we used the definitions \eqref{defG} and \eqref{defvarphi} in the second equation and the $q$-Binomial theorem \eqref{qbinom} in the third equation.
The previous calculation verifies the expression for $j(\rho)$ in \eqref{defrhojrho}.
\end{proof}

\begin{proof}[Heuristic proof of Proposition~\ref{prop:lln}]
The proof that we give here assumes the local stationarity of the particle system,
that is, the gaps between particles are distributed like \eqref{defG} for some space and time dependent parameter $\alpha$ which can be obtained from the local particle density.
The full proof of Proposition~\ref{prop:lln} follows from Theorem~\ref{thm:main}.

To get the hydrodynamic limit, position $p(\theta)\tau$ is considered after time $\tau$ for large $\tau$ for some parametric global position $p(\theta)$.
By local stationarity, the gaps between particles are distributed around this position given by \eqref{defG} for some $\alpha$ and we assume that the parametrization is such that $\alpha=q^\theta$ holds.
The relation of the macroscopic position $p(\theta)$ and $\alpha$ is therefore determined by the mass conservation PDE \eqref{pde} as follows.
By the stationarity assumption, we can use formulas \eqref{defrhojrho} to get
\begin{equation}\label{hydrosolution}\begin{aligned}
\rho(t,p(\theta)t)&=\frac{\log q}{\log q+\Psi_q(\theta)-\Psi_q(\theta+\log_q\nu)},\\
j(\rho(t,p(\theta)t))&=\frac{\Psi_q(\theta+\log_q\mu)-\Psi_q(\theta+\log_q\nu)}{\log q+\Psi_q(\theta)-\Psi_q(\theta+\log_q\nu)}
\end{aligned}\end{equation}
where $\rho$ with double argument is the solution of the PDE \eqref{pde}.
The function $p(\theta)$ can be expressed from the derivatives of the quantities in \eqref{hydrosolution} and using the PDE \eqref{pde}.
The solution is given in~\cite{BC15} in general simply by
\begin{equation}\label{pfrac}
p(\theta)=\frac{\partial j(\rho(t,p(\theta)))}{\partial\theta}\left(\frac{\partial \rho(t,p(\theta)t)}{\partial\theta}\right)^{-1}
\end{equation}
which in our case equals $(f-1)/\kappa$ with \eqref{defkappa}--\eqref{deff} by straightforward computation from \eqref{hydrosolution}.

Let $r(\theta)\tau$ be the label of particle at position $p(\theta)\tau$ at time $\tau$ in leading order for some function $r(\theta)$.
By definition, one has
\begin{equation}\label{rpderivative}
\frac{\d r(\theta)}{\d p(\theta)}=-\rho(t,p(\theta)t)
\end{equation}
with $r(\infty)=-p(\infty)$ by the step initial condition.
The equation \eqref{rpderivative} is satisfied by
\begin{align*}
r(\theta)&=-p(\infty)+\int_\theta^\infty \rho(t,p(\theta')t)\frac{\d p(\theta')}{\d\theta'}\d\theta'\\
&=-p(\infty)+[\rho(t,p(\theta')t)p(\theta')-j(\rho(t,p(\theta')t))]_\theta^\infty\\
&=-\rho(t,p(\theta)t)p(\theta)+j(\rho(t,p(\theta)t))
\end{align*}
where integration by parts and \eqref{pfrac} are used in the second equation as it is given in~\cite{BC15} in general.
Using the fact that $p(\theta)=(f-1)/\kappa$ and \eqref{hydrosolution}, it is straightforward to see that $r(\theta)=1/\kappa$.
This verifies the law of large numbers \eqref{lln}.
\end{proof}

At the end of this section, we show how Theorem~\ref{thm:kpz} follows from Theorem~\ref{thm:main} and we verify the validity of the KPZ scaling theory conjecture.

\begin{proof}[Proof of Theorem~\ref{thm:kpz}]
We start with \eqref{limthmwithtau} for $c=0$ which reads as
\begin{equation}\label{limthmsimple}
X_{\tau/\kappa}(\tau)=\frac{f-1}\kappa \tau+\frac{\chi^{1/3}}{\kappa^{1/3}\log q}\xi_\tau\tau^{1/3}
\end{equation}
where $\xi_\tau$ is asymptotically Tracy--Widom distributed.
The global position $(f-1)/\kappa$ and the global factor $1/\kappa$ of the particle label are both functions of $\theta$.
We consider the right-hand side of \eqref{limthmsimple} as the $\tau^{-2/3}$ order random perturbation of the function $(f-1)/\kappa$.
With a $\tau^{-2/3}$ order modification of $\theta$, the right-hand side of \eqref{limthmsimple} becomes $(f-1)/\kappa$
and the random perturbation appears in the particle label with coefficient multiplied by
\[-\frac\d{\d\theta}\frac1\kappa\left(\frac\d{\d\theta}\frac{f-1}\kappa\right)^{-1}=-\frac{\d r(\theta)}{\d p(\theta)}=\rho(t,p(\theta)t)=\frac{\log q}{\log q+\Psi_q(\theta)-\Psi_q(\theta+\log_q\nu)}\]
with the notations of the previous proof and by using \eqref{rpderivative} and \eqref{hydrosolution}.
This means that \eqref{limthmsimple} is transformed into
\begin{equation}\label{limthmNtilde}
X_{\wt N}(\tau)=\frac{f-1}\kappa \tau\quad\mbox{with}\quad\wt N=\frac\tau\kappa+\frac1{\log q+\Psi_q(\theta)-\Psi_q(\theta+\log_q\nu)}\frac{\chi^{1/3}}{\kappa^{1/3}}\xi_\tau\tau^{1/3}.
\end{equation}

By the remark before Theorem~\ref{thm:kpz}, we have the equality of events
\[\{X_N(\tau)<j\}=\left\{\frac{h(j,\tau)-j}2<N\right\}\]
which exactly implies \eqref{hfluct} using \eqref{limthmNtilde}.

Now we prove that the KPZ scaling theory conjecture is satisfied,
more precisely, we show that the non-universal scale coefficient in \eqref{hfluct} coincides with the prediction given in~\cite{Spo13a}.
We denote the particle density and the corresponding particle current given in \eqref{defrhojrho} by $\rho$ and $j$ respectively for the sake of simplicity.
The next two crucial quantities are defined in order to verify the KPZ conjecture of~\cite{Spo13a}.
First let
\begin{equation}\label{deflambda}
\lambda=\frac12\frac{\d^2j}{\d\rho^2}.
\end{equation}
We remark that the $\rho$ in~\cite{Spo13a} corresponds to $2\rho-1$ with our notation by the relation of the particle density and the slope of the height function $h$.
Consequently, the current in~\cite{Spo13a} equals to $-2j$ with the present notation the minus sign being present due to the fact that all particles jump to the left in~\cite{Spo13a}.
This explains the extra $-1/2$ factor in \eqref{deflambda} compared to~\cite{Spo13a}.

To define the second quantity, it is supposed that the stationary distribution of the gaps between particles is of the form
\[\P(G=k)=\frac1{Z(\alpha)}\left(\prod_{j=1}^k g(j)\right)^{-1}\alpha^k\]
for some increasing function $g$ where
\[Z(\alpha)=\sum_{k=0}^\infty\left(\prod_{j=1}^k g(j)\right)^{-1}\alpha^k\]
is the normalizing constant and $G(\alpha)=\log Z(\alpha)$.
Then the second quantity of interest is
\begin{equation}\label{defA}
A=\frac{4\alpha(\alpha G'(\alpha))'}{(1+\alpha G'(\alpha))^3}
\end{equation}
where the primes denote derivatives with respect to $\alpha$.
The scaling conjecture predicts the non-universal scale coefficient to be $-(-\frac12\lambda A^2\tau)^{1/3}$.

We check the conjecture by direct computation.
To simplify the calculation in this proof, we introduce the shorthand notation
\[a=\log q+\Psi_q(\theta)-\Psi_q(\theta+\log_q\nu),\qquad b=\Psi_q(\theta+\log_q\mu)-\Psi_q(\theta+\log_q\nu)\]
which we understand as functions of $\theta$.
With this notation,
\begin{equation}\label{rhojab}
\rho=\log q/a,\qquad j=b/a
\end{equation}
by \eqref{hydrosolution}.
In our case, the quantity $\lambda$ is computed as
\[\lambda=\frac{j''\rho'-j'\rho''}{2(\rho')^3}=\frac{a^3(b''a'-a''b')}{2(a')^3(\log q)^2}\]
where all the primes are derivatives with respect to $\theta$ and we used \eqref{deflambda} and \eqref{rhojab}.
For $q$-Hahn TASEP, $G(\alpha)=\log(\alpha\nu;q)_\infty-\log(\alpha;q)_\infty$ and the relation $\alpha=q^\theta$ holds as observed in the proof of Proposition~\ref{prop:lln},
hence differentiation as given by \eqref{defA} yields that
\[A=\frac{4a'\log q}{a^3}\]
where the prime is derivative with respect to $\theta$.
This gives that
\[-\frac12\lambda A^2=-\frac{4(b''a'-a''b')}{a'a^3}=-\frac{8\chi}{a^3\kappa}\]
where we used the observations $b''a'-a''b'=2\chi b'$ and $\kappa=a'/b'$ in the second equality.
This completes the proof of the scaling conjecture for $q$-Hahn TASEP.

Finally, we show the positivity of the parameter $\chi$.
By definition \eqref{defkappa}--\eqref{defchi} and due to the fact that the function $\Psi_q'$ is decreasing, the positivity of $\chi$ is equivalent to
\[\frac{\Psi_q''(\theta+\log_q\mu)-\Psi_q''(\theta+\log_q\nu)}{\Psi_q'(\theta+\log_q\mu)-\Psi_q'(\theta+\log_q\nu)}
>\frac{\Psi_q''(\theta)-\Psi_q''(\theta+\log_q\nu)}{\Psi_q'(\theta)-\Psi_q'(\theta+\log_q\nu)}.\]
We will prove that the function
\[x\mapsto\frac{\Psi_q''(\theta+x)-\Psi_q''(\theta+\log_q\nu)}{\Psi_q'(\theta+x)-\Psi_q'(\theta+\log_q\nu)}\]
is increasing for $x\in(0,\log_q\nu)$.
By taking derivative and using the fact that $\Psi_q'$ is decreasing and $\Psi_q''<0$, what remains to show is
\[\frac{\Psi_q'''(\theta+x)}{\Psi_q''(\theta+x)}<\frac{\Psi_q''(\theta+x)-\Psi_q''(\theta+\log_q\nu)}{\Psi_q'(\theta+x)-\Psi_q'(\theta+\log_q\nu)}.\]
By Cauchy's mean value theorem, the right-hand side above is equal to $\Psi_q'''(\theta+y)/\Psi_q''(\theta+y)$ for some $y\in(x,\log_q\nu)$.
Hence the proof is complete, since $\theta\mapsto\Psi_q'''(\theta)/\Psi_q''(\theta)$ is an increasing function, see e.g.\ the proof of Lemma 4.2 in~\cite{BC15}.
\end{proof}

\section{Finite time formula and proof of the main result}\label{s:finite}

The first part of Theorem 1.10 in~\cite{C14} gives the following Fredholm determinant expression for the $q$-Laplace transform of the particle position in $q$-Hahn TASEP with step initial condition.

\begin{theorem}\label{thm:finite}
Fix $q\in (0,1)$ and $0\leq \nu\leq \mu<1$.
Consider $q$-Hahn TASEP $(X_N(\tau))_{N\ge1}$ started from step initial data.
Then for all $\zeta\in\mathbb C\setminus \R_+$,
\begin{equation}\label{MellinBarnes}
\E\left(\frac1{(\zeta q^{X_{N}(\tau)+N};q)_{\infty}}\right)=\det\left(I + K_{\zeta}\right)_{L^2(C_1)}
\end{equation}
where $C_1$ is a positively oriented circle containing 1 with small enough radius so as to not contain 0, $1/q$ and $1/\nu$.
The operator $K_\zeta$ is defined in terms of its integral kernel
\begin{equation}\label{defKzeta}
K_{\zeta}(w,w') = \frac{1}{2\pi\I} \int_{\frac12+\I\R} \frac{\pi}{\sin(-\pi s)} (-\zeta)^s \frac{h(w)}{h(q^s w)} \frac{1}{q^s w - w'} \d s
\end{equation}
with
\[h(w) = \left(\frac{(\nu w;q)_{\infty}}{(w;q)_{\infty}}\right)^N \left( \frac{(\mu w;q)_{\infty}}{(\nu w;q)_{\infty}}\right)^\tau \frac{1}{(\nu w;q)_{\infty}}.\]
\end{theorem}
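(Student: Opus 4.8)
Since Theorem~\ref{thm:finite} restates the first part of Theorem~1.10 of~\cite{C14} (a simplified derivation being available in~\cite{B14a}), I describe the route I would follow, which transplants to the $q$-Hahn setting the scheme of Borodin--Corwin~\cite{BC11} and Borodin--Corwin--Ferrari~\cite{BCF14}. The argument rests on three ingredients: (i) a closed nested contour integral formula for the $q$-moments $\E\bigl[q^{k(X_N(\tau)+N)}\bigr]$; (ii) the identification of the left-hand side of~\eqref{MellinBarnes} as the generating function of these moments; and (iii) a resummation of that generating function into the Fredholm determinant on the right-hand side.

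\emph{Step (i): moment formula.} First I would invoke the Markov duality between $q$-Hahn TASEP and the $q$-Hahn zero range (``$q$-Boson'') process together with the Bethe-ansatz solvability of the latter (cf.~\cite{P13,BCPS14}); for step initial data this yields, for every $k\ge1$,
\[
\E\bigl[q^{k(X_N(\tau)+N)}\bigr]=\frac{(-1)^kq^{\binom k2}}{(2\pi\I)^k}\oint\cdots\oint\ \prod_{1\le A<B\le k}\frac{z_A-z_B}{z_A-qz_B}\ \prod_{j=1}^k\frac{h(z_j)}{h(qz_j)}\,\frac{\d z_j}{z_j},
\]
with $h$ as in the statement and nested contours (the $z_A$-contour encloses $q$ times the $z_B$-contour for all $B>A$), taken small enough to enclose the zeros of $(z;q)_\infty$ but to avoid $z=0$ and the zeros of $(\nu z;q)_\infty$. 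Here $\tfrac{h(z)}{h(qz)}=\bigl(\tfrac{1-\nu z}{1-z}\bigr)^N\bigl(\tfrac{1-\mu z}{1-\nu z}\bigr)^\tau(1-\nu z)$, which degenerates to the familiar $q$-TASEP factor $e^{(q-1)tz}(1-z)^{-N}$ under $\mu=(1-q)\varepsilon$, $\tau\sim t/\varepsilon$, $\nu\to0$. I would either cite this formula from~\cite{C14} or reprove it from the closed ``true evolution'' system the $q$-moments satisfy.

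\emph{Steps (ii)--(iii): generating function and resummation.} Since $X_N(\tau)\ge-N$ for step data, $|q^{X_N(\tau)+N}|\le1$, so by the $q$-binomial theorem~\eqref{qbinom}, $\tfrac1{(\zeta q^{X_N(\tau)+N};q)_\infty}=\sum_{k\ge0}\tfrac{\zeta^k}{(q;q)_k}q^{k(X_N(\tau)+N)}$ absolutely for $|\zeta|$ small; taking expectations and interchanging (dominated convergence) gives $\E\bigl[(\zeta q^{X_N(\tau)+N};q)_\infty^{-1}\bigr]=\sum_{k\ge0}\tfrac{\zeta^k}{(q;q)_k}\E\bigl[q^{k(X_N(\tau)+N)}\bigr]$. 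Into the right-hand side I substitute the moment formula and carry out the standard resummation of~\cite{BC11}: deform every nested contour onto one common small circle $C_1$ around $1$, keeping account of the residues collected at the clusterings $z_A=qz_B$; the resulting sum over clusterings reassembles, through a Cauchy-type determinantal identity, into the Fredholm expansion $\det(I+K_\zeta)_{L^2(C_1)}=\sum_{k\ge0}\tfrac1{k!}\oint_{C_1}\!\cdots\oint_{C_1}\det\bigl[K_\zeta(w_i,w_j)\bigr]_{i,j=1}^k\prod_i\tfrac{\d w_i}{2\pi\I}$, once the geometric-type series produced by the residue bookkeeping is recognised, via the standard Mellin--Barnes identity, as the $s$-integral appearing in~\eqref{defKzeta} — legitimate because $\pi/\sin(-\pi s)$ has simple poles at the integers with residues $\pm1$, reproducing the original series, while the integrand decays along $\tfrac12+\I\R$. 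Both sides of~\eqref{MellinBarnes} being analytic in $\zeta$ on $\mathbb C\setminus\R_+$, the identity then extends there from a neighbourhood of the origin.

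\emph{Main obstacle.} The technical heart is Step~(iii): justifying each contour deformation and tracking the residues exactly when passing from the large, nested moment contours to the single small circle $C_1$, and verifying that the placement $s\in\tfrac12+\I\R$, $w,w'\in C_1$ meets no pole of $h(w)/h(q^sw)$ or of $1/(q^sw-w')$ — which is precisely what forces $C_1$ to avoid $0$, $1/q$ and $1/\nu$ — together with the absolute convergence of the Fredholm series and of the interchanges of sum and integral for small $\zeta$, on which the analytic continuation rests. Obtaining the moment formula of Step~(i) with the correct analytic structure of $h$ is the other substantial input; the remaining manipulations are routine bookkeeping.
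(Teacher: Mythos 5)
The paper does not reprove this statement at all --- it is quoted directly from the first part of Theorem~1.10 of \cite{C14} (with the simplified proof of \cite{B14a}) --- and your outline reproduces exactly the route of those works (duality with the $q$-Hahn Boson process, nested-contour $q$-moment formulas, $q$-binomial generating series, and the Mellin--Barnes resummation into a Fredholm determinant), so the approach is essentially the same as the paper's source. One small algebra slip to fix: the one-step ratio is $h(z)/h(qz)=\left(\frac{1-\nu z}{1-z}\right)^N\left(\frac{1-\mu z}{1-\nu z}\right)^{\tau}\frac{1}{1-\nu z}$, i.e.\ the final factor is $1/(1-\nu z)$, not $(1-\nu z)$.
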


Let us choose
\begin{equation}\label{defzeta}
\zeta=-q^{-fN+\frac{c\phi}{\log q}N^{2/3}+\beta_x\frac{N^{1/3}}{\log q}}\in\mathbb C\setminus\R_+
\end{equation}
where
\begin{equation}\label{defbeta}
\beta_x=\frac{c^2(\phi')^2}{4\chi}-\chi^{1/3}x.
\end{equation}
Theorem~\ref{thm:Fredholmconv} below is about the convergence of the Fredholm determinant on the right-hand side of \eqref{MellinBarnes} under the right scaling of the parameters.
It is the most important input for the Tracy--Widom limit of the rescaled particle position in $q$-Hahn TASEP.
It is proved by the method of steepest descent later in Section~\ref{s:analysis}.
We show in this section how the proof of Theorem~\ref{thm:main}, the main result of this paper follows from Theorem~\ref{thm:Fredholmconv}.

\begin{theorem}\label{thm:Fredholmconv}
Let $x\in\R$ be fixed and choose $\zeta$ according to \eqref{defzeta}.
Let $\tau$ be scaled with $N$ as in \eqref{deftau}.
Suppose that for the parameters of the $q$-Hahn TASEP, the conditions \eqref{munucond}--\eqref{thetacond} hold.
Then
\[\det(\id+K_\zeta)_{L^2(C_1)}\to F_{\rm GUE}(x)\]
as $N\to\infty$.
\end{theorem}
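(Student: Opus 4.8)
The plan is to run a steepest-descent analysis of the Fredholm determinant $\det(\id+K_\zeta)_{L^2(C_1)}$ supplied by Theorem~\ref{thm:finite} and to identify its limit with the representation $F_{\rm GUE}(x)=\det(\id-K_{\Ai})_{L^2(x,\infty)}$ of the GUE Tracy--Widom distribution, following the scheme used for $\qT$ in \cite{BCF14} and \cite{FV14} but now for the three-parameter function attached to $q$-Hahn TASEP. First I would rewrite the kernel \eqref{defKzeta}: substituting $z=q^s w$ and using the explicit $h$, the factor $(-\zeta)^s h(w)/h(q^s w)$ equals $e^{G(z)-G(w)}$ with $G(v)=\log_q(-\zeta)\log v-\log h(v)$, while $\frac{\pi}{\sin(-\pi s)}\frac{1}{q^s w-w'}\,\d s$ becomes $\frac{\pi}{\sin(-\pi s)}\frac{1}{z-w'}\frac{\d z}{z\log q}$ with $s=\log_q(z/w)$. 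Plugging in $\tau=\kappa N+cN^{2/3}$ from \eqref{deftau} and $\zeta$ from \eqref{defzeta}, one obtains a three-scale expansion $G(v)=N\,G_0(v)+N^{2/3}G_1(v)+N^{1/3}G_2(v)+\O(1)$ in which $G_0,G_1,G_2$ are explicit combinations of $\log v$ and logarithms of the $q$-Pochhammer symbols in $h$. Definition~\ref{def:kfa} is designed so that, for the chosen $\kappa$ and $f$, the leading term $G_0$ has a \emph{double critical point} at $w_c=q^\theta$, namely $G_0'(w_c)=G_0''(w_c)=0$ with $G_0'''(w_c)\ne0$ equal (up to an explicit constant) to the quantity $\chi$ of \eqref{defchi}, so that $\chi>0$ fixes the orientation needed for the Airy asymptotics; the precise coefficients in \eqref{defp}, \eqref{defzeta}, \eqref{defbeta} involving $\phi,\phi'$ are likewise arranged so that, after the $N^{-1/3}$-rescaling around $w_c$ in the last step, the sub-leading contributions of $G_1,G_2$ collapse into exactly the linear term $-x\,\tilde w$, the residual $c$-dependent pieces cancelling (consistently with the $\mathrm{Airy}_2$ one-point marginal being $c$-independent).

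Next I would deform the $w,w'$-contour $C_1$ to a circular steep-descent contour $\C_\theta$ through $w_c$ and, simultaneously, deform the $s$-line so that $z=q^s w$ traverses a circular steep-descent contour $\D_\theta$ through $w_c$; the construction of $\C_\theta,\D_\theta$ and the proof that they are of steep descent for $\Re G_0$ form the analytic core of the argument (Section~\ref{s:steep}), and this is where condition \eqref{munucond} is needed. During the deformation one must track the poles crossed: the poles of $1/(q^s w-w')$ yield the residue term converting $\det(\id+K_\zeta)$ into $\det(\id-K_x)_{L^2(\C_\theta)}$ with $K_x$ as in \eqref{defKx}, while condition \eqref{thetacond} is what guarantees (as in the first step of the proof of Proposition~\ref{prop:localization}) that no pole of $\pi/\sin(-\pi s)$ is encountered.

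Then, using the steep-descent bounds of Propositions~\ref{prop:steep1} and~\ref{prop:steep2}, I would localize: off the arcs of length $\O(N^{-1/3})$ of $\C_\theta$ and $\D_\theta$ around $w_c$ the integrand is exponentially small in $N$, so these arcs carry the whole determinant up to an error tending to $0$ in trace norm (Proposition~\ref{prop:localization}). On the truncated arcs I would set $w=w_c(1+\gamma\tilde w\,N^{-1/3})$, $z=w_c(1+\gamma\tilde z\,N^{-1/3})$ with $\gamma$ chosen (involving $\chi^{-1/3}$ and matching the normalisation in \eqref{defxi}) so that $N(G_0(w)-G_0(w_c))\to\tfrac13\tilde w^3$; the sub-leading terms then produce $-x\tilde w$, the factor $\pi/\sin(-\pi s)$ with $s=\log_q(z/w)\to0$ becomes $\sim 1/(\tilde z-\tilde w)$, and the Jacobians absorb the leftover powers of $N$. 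The rescaled kernel converges pointwise to (a conjugate of) the Airy kernel $K_{\Ai}$ on $L^2(x,\infty)$; since the steep-descent bounds also provide an $N$-uniform integrable domination, one passes to the limit in the Fredholm series to get $\det(\id+K_\zeta)_{L^2(C_1)}\to\det(\id-K_{\Ai})_{L^2(x,\infty)}=F_{\rm GUE}(x)$.

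The main obstacle is the second step: proving that the circular contours $\C_\theta,\D_\theta$ are genuinely of steep descent for $\Re G_0$ throughout the admissible range of $(\mu,\nu,\theta)$ under \eqref{munucond}--\eqref{thetacond} (Section~\ref{s:steep}). Unlike the one-parameter $\qT$ case, the relevant inequalities are between $\Psi_q$-type quantities at three shifted arguments and the needed monotonicity estimates are substantially more involved — this is precisely why \eqref{munucond} is imposed. A secondary, more technical, difficulty is the bookkeeping of the extra $\pi/\sin(-\pi s)$-poles during the contour deformation (controlled by \eqref{thetacond}) together with the $N$-uniform kernel bounds required to promote pointwise convergence of kernels to trace-norm convergence of the Fredholm determinants.
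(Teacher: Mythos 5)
Your proposal follows essentially the same route as the paper: starting from the Fredholm determinant of Theorem~\ref{thm:finite}, it performs a steepest-descent analysis around the double critical point $q^\theta$ built into Definition~\ref{def:kfa}, using the circular steep-descent contours $\C_\theta$, $\D_\theta$ of Section~\ref{s:steep} (where \eqref{munucond} enters), localization to $N^{-1/3}$-neighbourhoods with \eqref{thetacond} controlling the $\pi/\sin(-\pi s)$ poles, rescaling to a cubic-exponent kernel, and identification with the Airy kernel, exactly as in Propositions~\ref{prop:localization}--\ref{prop:rewritekernel}. Two minor corrections of detail: in the paper's deformation no residues of $1/(q^sw-w')$ are picked up — the minus sign in $\det(\id-K_{x,\delta})$ arises from an orientation reversal of the localized contour — and the $c$-dependent pieces do not cancel at the rescaling stage (the limit kernel \eqref{kernelK'} retains the quadratic term $c\phi'z^2/2$, with $\beta_x$ containing $c^2(\phi')^2/(4\chi)$); they are removed only in the final rewriting and conjugation step of Proposition~\ref{prop:rewritekernel}, consistent with your own remark that the limit is a conjugate of the Airy kernel.
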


\begin{proof}[Proof of Theorem~\ref{thm:main}]
With the scaling \eqref{defzeta} of $\zeta$ on the left-hand side of \eqref{MellinBarnes}, one has
\begin{equation}\label{zetaeq}
\zeta q^{X_{N}(\tau)+N}=-q^{\frac{\chi^{1/3}}{\log q}N^{1/3}(\xi_N-x)}.
\end{equation}
Hence that the argument presented in Section 5 of~\cite{FV14} can be used for the $q$-Hahn TASEP as well.
In particular, using Lemma 5.1 of~\cite{FV14}, it follows from \eqref{zetaeq} that the left-hand side of \eqref{MellinBarnes}
converges as $N\to\infty$ to the limiting distribution function $\lim_{N\to\infty}\P(\xi_N<x)$ when $\zeta$ is rescaled via \eqref{defzeta}.
By Lemma 4.1.39 of~\cite{BC11}, Theorem~\ref{thm:Fredholmconv} on the convergence of the right-hand side of \eqref{MellinBarnes}
to the GUE Tracy--Widom distribution function is enough for the weak convergence of $\xi_N$ and for the proof of Theorem~\ref{thm:main}.
\end{proof}

\section{Asymptotic analysis}\label{s:analysis}

We prove Theorem~\ref{thm:Fredholmconv} in this section.
In order to perform the asymptotic analysis, we substitute \eqref{deftau} and \eqref{defzeta} for the values of $\tau$ and $\zeta$ into \eqref{defKzeta} and perform the change of variables
\begin{equation}\label{wWsZ}
w=q^W,\qquad w'=q^{W'},\qquad s+W=Z.
\end{equation}
The kernel which we get is
\begin{multline}\label{defKx}
K_x(W,W')\\
=\frac{q^W\log q}{2\pi\I}\int_{\theta+\I\R}\frac{\d Z}{q^Z-q^{W'}}\frac\pi{\sin(\pi(W-Z))}\frac{(\nu q^Z;q)_\infty}{(\nu q^W;q)_\infty}
\frac{e^{Nf_0(q^Z)+N^{2/3}f_1(q^Z)+N^{1/3}f_2(q^Z)}}{e^{Nf_0(q^W)+N^{2/3}f_1(q^W)+N^{1/3}f_2(q^W)}}
\end{multline}
with
\begin{align}
f_0(z)&=-f\log z+\kappa(\log(\nu z;q)_\infty-\log(\mu z;q)_\infty)+\log(z;q)_\infty-\log(\nu z;q)_\infty,\label{deff0}\\
f_1(z)&=c\phi\log_q z+c(\log(\nu z;q)_\infty-\log(\mu z;q)_\infty),\label{deff1}\\
f_2(z)&=\beta_x\log_q z\label{deff2}
\end{align}
and $\beta_x$ as in \eqref{defbeta}.
The contours for the Fredholm determinant and for the integral defining the kernel transform under the change of variables \eqref{wWsZ} are as follows.
The contour for $w$ and $w'$ was originally $C_1$, a small circle around $1$, hence the contour for $W$ and $W'$ can be chosen to be $C_0$
which is a small circle around $0$ that does not contain the singularities at $-1$ and at $-\log_q\nu$.
If this circle is small enough, the contour for $Z$ becomes a small perturbation of $1/2+\I\R$ which can be shifted to $\theta+\I\R$ without crossing any singularity of the integrand
since \eqref{thetacond} means in particular that $\theta\in(0,1)$.
Hence the choice for the $Z$ contour in \eqref{defKx} is appropriate and we can write the equality of the Fredholm determinants
\[\det(\id+K_\zeta)_{L^2(C_1)} = \det(\id+K_x)_{L^2(C_0)}.\]

Theorem~\ref{thm:Fredholmconv} follows immediately from the series of propositions below.
The propositions are stated in this section without proofs in order to keep the proof of Theorem~\ref{thm:Fredholmconv} transparent.
The proofs of the propositions are given later separately in Section~\ref{s:prop}.
In the propositions without repeating everywhere, we assume that for the parameters of the $q$-Hahn TASEP, the conditions \eqref{munucond}--\eqref{thetacond} hold.
To state the first proposition, we introduce the V-shaped contour
\begin{equation}\label{defV}
V_{\theta,\varphi}^\delta=\{\theta+e^{\I\varphi\sgn(t)}|t|:t\in[-\delta,\delta]\}
\end{equation}
where $\theta>0$ is the tip of the V, $\varphi\in(0,\pi)$ is its angle and $\delta\in\R_+\cup\{\infty\}$.
We also introduce the kernel
\begin{multline}\label{defKxdelta}
K_{x,\delta}(W,W')\\
=\frac{q^W\log q}{2\pi\I}\int_{V_{\theta,\varphi}^\delta}\frac{\d Z}{q^Z-q^{W'}}\frac\pi{\sin(\pi(W-Z))}\frac{(\nu q^Z;q)_\infty}{(\nu q^W;q)_\infty}
\frac{e^{Nf_0(q^Z)+N^{2/3}f_1(q^Z)+N^{1/3}f_2(q^Z)}}{e^{Nf_0(q^W)+N^{2/3}f_1(q^W)+N^{1/3}f_2(q^W)}}
\end{multline}
where $W,W'\in V_{\theta,\pi-\varphi}^\delta$.
The dependence of the kernel on $\varphi$ is not indicated in the notation.
Note that $K_{x,\delta}$ only differs from $K_x$ by the integration contours.

\begin{proposition}\label{prop:localization}
For any fixed $\delta>0$ and $\varepsilon>0$ small enough, there are $\varphi\in(0,\pi/2)$ and $N_0$ such that for all $N>N_0$
\[\left|\det(\id+K_x)_{L^2(C_0)}-\det(\id-K_{x,\delta})_{L^2(V_{\theta,\pi-\varphi}^\delta)}\right|<\varepsilon.\]
\end{proposition}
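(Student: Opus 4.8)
The plan is to deform both contours in the kernel \eqref{defKx} onto steep--descent paths that, near the point $\theta$, coincide with the V--shaped arcs entering $K_{x,\delta}$, and then to discard the parts of those paths away from $\theta$ at the price of an exponentially small error. Recall that $f$ and $\kappa$ in Definition~\ref{def:kfa} are chosen so that $W\mapsto f_0(q^W)$ has a \emph{double} critical point at $W=\theta$, the leading correction being cubic with coefficient proportional to $\chi>0$. First I would fix the angle $\varphi\in(0,\pi/2)$ so that: (i) the two arms of $V_{\theta,\pi-\varphi}^\delta$ leave $\theta$ into the sector where $\Re f_0(q^W)$ strictly increases and the two arms of $V_{\theta,\varphi}^\delta$ into the sector where $\Re f_0(q^Z)$ strictly decreases — since the argument of the cubic coefficient is fixed and $\chi>0$, this confines the admissible $\varphi$ to a fixed subinterval of $(0,\pi/2)$ reaching up to $\pi/2$ — and (ii) $\varphi$ is taken close enough to $\pi/2$ within that subinterval and the radius of $C_0$ small enough that $\Re(W-Z)$ stays in $(-1,1)$ along the deformed contours except in a small neighbourhood of their common tip.

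Next I would carry out the deformation itself: push the outer contour $C_0$ for $W,W'$ onto a contour $\wh\D$ that equals $V_{\theta,\pi-\varphi}^\delta$ within distance $\delta$ of $\theta$ and is completed by an arc of $\D_\theta$ on which $\Re f_0(q^W)>\Re f_0(q^\theta)$, and push the line $\theta+\I\R$ for $Z$ onto a contour $\wh\C$ that equals $V_{\theta,\varphi}^\delta$ near $\theta$ and is completed by an arc of $\C_\theta$ on which $\Re f_0(q^Z)<\Re f_0(q^\theta)$. I would then check that nothing is crossed: the poles of $\pi/\sin(\pi(W-Z))$ at $W-Z\in\Z$ are avoided because \eqref{thetacond} gives in particular $\theta\in(0,1)$, so that $W-Z$ stays in the strip $|\Re(W-Z)|<1$ away from the common tip, where a negligible local perturbation keeps $W\neq Z$; the pole at $q^Z=q^{W'}$ is kept on one side by deforming the $W'$-- and $Z$--contours in the appropriate order; and the $q$--Pochhammer singularities of $h$, which sit at $|z|\ge1$ and at $q^W,q^Z,\nu q^W,\mu q^W\in\{q^{-k}\}_{k\ge0}$, lie outside the region $\{|z|<1\}$ that is swept once $\varphi$ is close to $\pi/2$. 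Reorienting the closed loop $C_0$ into the open arc $V_{\theta,\pi-\varphi}^\delta$, traversed by the convention of \eqref{defV}, is what supplies the minus sign, so this step yields the identity $\det(\id+K_x)_{L^2(C_0)}=\det(\id-\wh K)_{L^2(\wh\D)}$, where $\wh K$ is the analogue of $K_{x,\delta}$ with the $Z$--integral over $\wh\C$.

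Finally I would compare $\det(\id-\wh K)_{L^2(\wh\D)}$ with $\det(\id-K_{x,\delta})_{L^2(V_{\theta,\pi-\varphi}^\delta)}$; the difference comes only from the completing arcs of $\wh\C$ and $\wh\D$. On them the steep--descent property (Propositions~\ref{prop:steep1}--\ref{prop:steep2}, proved in Section~\ref{s:steep}) provides a constant $c(\delta)>0$ with $\Re\big(f_0(q^Z)-f_0(q^W)\big)\le-c(\delta)$ whenever $W$ or $Z$ lies on such an arc, while the remaining factors — the sine term, the ratio of $q$--Pochhammers, and $1/(q^Z-q^{W'})$ — are bounded along the contours, the sine factor decaying exponentially in the imaginary parts and thereby ensuring absolute convergence. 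Hence the corresponding kernel differences have trace norm $O(e^{-c(\delta)N})$; combining this with an $N$--uniform bound on $\|\wh K\|_1$ and $\|K_{x,\delta}\|_1$ (obtained by factoring each kernel into two Hilbert--Schmidt pieces and estimating their Hilbert--Schmidt norms via the decay of the integrand along the contours) and the inequality $|\det(\id+A)-\det(\id+B)|\le\|A-B\|_1\exp(1+\|A\|_1+\|B\|_1)$, the determinant difference is $O(e^{-c(\delta)N})$, hence smaller than $\varepsilon$ for all $N>N_0(\delta,\varepsilon)$. The hard part will be the bookkeeping in the deformation step: moving the $W,W'$-- and $Z$--contours simultaneously while every singularity stays on the correct side, in particular keeping the whole $\Z$--lattice of poles of $\pi/\sin(\pi(W-Z))$ off the contours — which is exactly what dictates the technical condition \eqref{thetacond} — together with pinning down the quantitative gap $c(\delta)>0$ and the uniform trace--norm bounds.
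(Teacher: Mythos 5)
Your overall architecture (deform, localize near $\theta$, control the discarded pieces, then compare determinants via trace-norm estimates) matches the paper's Steps 1--3 for the $W,W'$ variables, but there is a genuine gap in your treatment of the $Z$-integral. The $Z$-contour is the \emph{infinite} line $\theta+\I\R$, and $\Re f_0(q^Z)$ is periodic along it with period $2\pi/|\log q|$, so it attains its maximum $\Re f_0(q^\theta)$ at every translate $\theta+2k\pi\I/\log q$, $k\in\Z$, not only at $\theta$. Consequently there is no constant $c(\delta)>0$ with $\Re\bigl(f_0(q^Z)-f_0(q^\theta)\bigr)\le -c(\delta)$ on the part of the line you want to discard, and the error from throwing it away is \emph{not} $\O(e^{-c(\delta)N})$. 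Nor can you deform the infinite line onto a single period (a V plus a completing arc of a circle's $\log_q$-image): the factor $\pi/\sin(\pi(W-Z))$ is not periodic with period $2\pi\I/\log q$, so that deformation changes the integral. The paper keeps the whole line, uses steep descent of $\D_\theta$ together with periodicity to localize to the union of $\delta$-neighbourhoods $I_k$ of all the points $\theta+2k\pi\I/\log q$, and then shows that after the $N^{-1/3}$ rescaling only $I_0$ survives: for $k\neq 0$ the sine factor contributes $\O(N^{-1/3}e^{-2\pi^2|k|/|\log q|})$, summable in $k$ but of smaller order than the $k=0$ term. This extra argument, which produces an error that vanishes only polynomially in $N$ (still sufficient for the proposition), is missing from your proposal and cannot be replaced by a steep-descent bound alone.

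Two smaller points. First, you have swapped the roles of the two circles: $\C_\theta$ (centred at $1$, through $q^\theta$) is the steep-descent contour for the $W$-variables, on which $\Re f_0\ge\Re f_0(q^\theta)$, while $\D_\theta$ (centred at $0$) maps under $\log_q$ to one period of the $Z$-line and carries $\Re f_0\le\Re f_0(q^\theta)$; as written, your inequalities on "arcs of $\D_\theta$" for $W$ and "arcs of $\C_\theta$" for $Z$ contradict Propositions~\ref{prop:steep1} and~\ref{prop:steep2} (also note these circles live in the $w=q^W$ plane, so they must be transported by $\log_q$ before being glued to $V$-contours). Second, in the blow-up of $C_0$ the singularities one must avoid are pinned down quantitatively by \eqref{munucond} (they lie at real parts at most $\log_q 2$, while $\wt\C_\theta$ stays to the right of $\log_q(2-q^\theta)$) and by \eqref{thetacond} for the sine poles; your "swept region $\{|z|<1\}$" description is not accurate, though you correctly identified that \eqref{thetacond} is what protects against the lattice of sine poles. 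Your final determinant-comparison via $\|A-B\|_1$ and Hilbert--Schmidt factorization is a legitimate alternative to the paper's Fredholm-series/dominated-convergence argument, but it only goes through once the $Z$-localization above is repaired.
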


By defining the rescaled kernel
\begin{equation}\label{defKxrescaled}
K_{x,\delta}^N(w,w')=N^{-1/3}K_{x,\delta N^{1/3}}(\theta+wN^{-1/3},\theta+w'N^{-1/3}),
\end{equation}
the change of variables shows that
\[\det(\id-K_{x,\delta})_{L^2(V_{\theta,\pi-\varphi}^\delta)}=\det(\id-K_{x,\delta}^N)_{L^2(V_{0,\pi-\varphi}^{\delta N^{1/3}})}.\]
Next we show that on the contour $V_{0,\pi-\varphi}^{\delta N^{1/3}}$,
the kernel $K_{x,\delta}^N$ can be replaced by the one obtained by using the Taylor approximation given later in \eqref{f0Taylor}--\eqref{f2Taylor}.

\begin{proposition}\label{prop:kernelconv}
For any fixed $\varepsilon>0$ small enough, there is a small $\delta>0$ and an $N_0$ such that for any $N>N_0$,
\[\left|\det(\id-K_{x,\delta}^N)_{L^2(V_{0,\pi-\varphi}^{\delta N^{1/3}})}-\det(\id-K_{x,\delta N^{1/3}}')_{L^2(V_{0,\pi-\varphi}^{\delta N^{1/3}})}\right|<\varepsilon\]
where
\begin{equation}\label{kernelK'}
K_{x,L}'(w,w')=\frac1{2\pi\I}\int_{V_{0,\varphi}^L}\frac{\d z}{(z-w')(w-z)}\frac{e^{\chi z^3/3+c\phi'z^2/2+\beta_x z}}{e^{\chi w^3/3+c\phi'w^2/2+\beta_x w}}.
\end{equation}
\end{proposition}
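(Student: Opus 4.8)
The plan is to run the standard steepest-descent machinery for Fredholm determinants: reduce the statement to a trace-norm estimate, and then combine the Taylor expansions of $f_0,f_1,f_2$ with a dominated-convergence argument carried out on the contour, which now grows with $N$.

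First I would reduce to operator norms. By the elementary inequality $|\det(\id-A)-\det(\id-B)|\le\|A-B\|_1\exp(1+\|A\|_1+\|B\|_1)$ for trace-class operators, where $\|\cdot\|_1$ is the trace norm, the claim follows once one shows that $\|K_{x,\delta}^N\|_1$ and $\|K'_{x,\delta N^{1/3}}\|_1$ are bounded uniformly in $N$ (for $\delta$ small) and that $\|K_{x,\delta}^N-K'_{x,\delta N^{1/3}}\|_1\to0$. Both kernels are of the form $\frac1{2\pi\I}\int_z\frac{e^{\Phi_N(z)-\Phi_N(w)}}{(z-w')(w-z)}p(w,z)\,\d z$: for $K'_{x,\delta N^{1/3}}$ one has $p\equiv1$ and $\Phi_N=\Phi$ with $\Phi(w)=\chi w^3/3+c\phi'w^2/2+\beta_x w$, while for $K_{x,\delta}^N$, after extracting the common constant $e^{Nf_0(q^\theta)+N^{2/3}f_1(q^\theta)+N^{1/3}f_2(q^\theta)}$ and rescaling, $\Phi_N(w)$ is the rescaled version of $Nf_0+N^{2/3}f_1+N^{1/3}f_2$ about $q^\theta$ and $p(w,z)$ collects the uniformly bounded prefactors (the $q$-Pochhammer ratio, the factor $q^W\log q$, and the ratio converting $\frac\pi{\sin(\pi(w-z)N^{-1/3})}$ into $N^{1/3}(w-z)^{-1}$).

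Next, the trace-class bounds. Conjugating the operator by the multiplication operator $w\mapsto e^{\Phi_N(w)/2}$ (which leaves the Fredholm determinant unchanged) symmetrizes the exponent, and I would factor the conjugated operator through the $z$-contour as a product of two operators whose Hilbert–Schmidt norms are controlled by $\iint e^{\Re\Phi_N(z)-\Re\Phi_N(w)}|p|^2|w-z|^{-2}$. The steep-descent estimates established in Section~\ref{s:steep} (Propositions~\ref{prop:steep1}--\ref{prop:steep2}, valid under \eqref{munucond}--\eqref{thetacond}) enter here: on the $z$-arm of the V one has $\Re\Phi_N(z)\le-c|z|^3$ and on the $w$-arm $-\Re\Phi_N(w)\le-c|w|^3$ with $c>0$ independent of $N$. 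Since $q^\theta$ is a double saddle of $f_0$ with third derivative $2\chi$ and the first-order correction coming from $f_1$ cancels (by the choice of $\kappa,f$ in Definition~\ref{def:kfa} and of $\zeta$ in \eqref{defzeta}), the terms $f_1,f_2$ and the remainders of $f_0$ perturb the exponent only by $O(|z|^2)$, $O(|w|^2)$ and do not destroy the cubic decay. The one subtle point is the integrable $\log$-type singularity of the kernel at the tip of the V, where the $z$- and $w$-arms cross at the nonzero angle fixed by $\varphi\in(0,\pi/2)$; its singular part factorizes off and is absorbed as in \cite{BCF14,FV14}, which yields the uniform trace-class bounds along the contour of length $\sim\delta N^{1/3}$.

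For the convergence I would split the contours at a large but fixed radius $M$. On $\{|w|,|z|\le M\}$, the Taylor expansions \eqref{f0Taylor}--\eqref{f2Taylor} about $q^\theta$ give $\Phi_N(w)=\chi w^3/3+c\phi'w^2/2+\beta_x w+O(M^4N^{-1/3})$, the prefactor satisfies $p(w,z)\to1$, and the resolvent and sine factors converge to $(z-w')^{-1}(w-z)^{-1}$, so the integrand of $K_{x,\delta}^N(w,w')$ converges pointwise on compact sets to that of $K'_{x,\delta N^{1/3}}(w,w')$; together with the $N$-uniform dominating function $e^{-c(|z|^3+|w|^3)}|w-z|^{-2}$ from the previous step, dominated convergence shows that the corresponding part of $\|K_{x,\delta}^N-K'_{x,\delta N^{1/3}}\|_1$ tends to $0$. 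The part coming from $|w|>M$ or $|z|>M$ is bounded, uniformly in $N$, by $C\int_M^\infty e^{-cr^3}\,\d r$ for each of the two kernels. Choosing $\delta$ small enough that the steep-descent property holds for a suitable $\varphi$, then $M$ large enough to make the tail contribution $<\varepsilon/2$, then $N_0$ large enough to make the compact part $<\varepsilon/2$, proves the proposition.

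The hard part is the uniform trace-class bound of the second paragraph rather than the pointwise convergence: because the contour $V_{0,\pi-\varphi}^{\delta N^{1/3}}$ grows with $N$, one cannot merely invoke compactness, and must combine the cubic steep-descent decay all along the contour with an $N$-uniform treatment of the mild singularity at the tip of the V. Verifying that the subleading contributions $f_1,f_2$ (and the Taylor remainders of $f_0$) perturb the saddle structure only at lower order — which is exactly what the parametrization of Definition~\ref{def:kfa} and the conditions \eqref{munucond}--\eqref{thetacond} are arranged to guarantee — is the other point where genuine work is required.
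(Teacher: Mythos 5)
Your proposal is correct and is essentially the paper's own argument: the change of variables \eqref{localchov}, the Taylor expansions \eqref{f0Taylor}--\eqref{f2Taylor}, a uniform cubic-decay bound obtained by taking $\delta$ so small that the quartic remainder of $Nf_0$ is at most $\epsilon|w|^3$, and a dominated-convergence conclusion (the paper compares the kernels pointwise via $|e^x-1|\le |x|e^{|x|}$ and passes to the determinants through the Fredholm series, whereas you route the same estimates through a trace-norm inequality and a Hilbert--Schmidt factorization, which is only a difference of bookkeeping). One small correction: the uniform bound of the form $-N\Re f_0(q^{\theta+wN^{-1/3}})+N\Re f_0(q^\theta)\le -c|w|^3$ along the arms of the V is not supplied by Propositions \ref{prop:steep1}--\ref{prop:steep2}, which concern the circular contours $\C_\theta$ and $\D_\theta$ used in the proof of Proposition \ref{prop:localization}; on the rescaled short contours it comes, as you in effect use later, from the Taylor expansion together with the smallness of $\delta$ and the choice of the angle $\varphi$.
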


\begin{proposition}\label{prop:kernelextend}
With the notation as above,
\[\det(\id-K_{x,\delta N^{1/3}}')_{L^2(V_{0,\pi-\varphi}^{\delta N^{1/3}})}\to\det(\id-K_{x,\infty}')_{L^2(V_{0,\pi-\varphi}^\infty)}\]
as $N\to\infty$.
\end{proposition}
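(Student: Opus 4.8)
The plan is to show that the Fredholm determinant of the limiting kernel $K'_{x,L}$ on the truncated contour $V_{0,\pi-\varphi}^L$ converges, as $L=\delta N^{1/3}\to\infty$, to the Fredholm determinant on the full infinite contour $V_{0,\pi-\varphi}^\infty$. Since $K'_{x,L}$ and $K'_{x,\infty}$ have exactly the same integrand and differ only in the range of integration (and in the domain on which the operator acts), the natural route is to estimate, in trace-class norm, the difference of the two operators and then invoke the Lipschitz continuity of the Fredholm determinant with respect to the trace norm (i.e.\ $|\det(\id+A)-\det(\id+B)|\le\|A-B\|_1 \exp(1+\|A\|_1+\|B\|_1)$). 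So the first step is to record the Gaussian-type decay of the integrand. Writing $g(z)=\chi z^3/3+c\phi' z^2/2+\beta_x z$, on the ray $z=e^{\I\varphi}t$ with $\varphi$ chosen slightly larger than $\pi/3$ (which is exactly the point of the contour choice), one has $\Re(\chi z^3/3)=\tfrac{\chi}{3}|t|^3\cos(3\varphi)<0$ with a strictly negative cosine, so $\Re(g(z))\le -c_1|t|^3+c_2|t|^2+c_3|t|$ for constants depending only on $\chi,c,\phi',\beta_x$; symmetrically $\Re(g(w))\ge c_1|w|^3 - \dots$ on the conjugate contour $V_{0,\pi-\varphi}$. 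Hence the kernel obeys a bound of the form $|K'_{x,\infty}(w,w')|\le C\,e^{-c_1(|z|^3+|w|^3)/2}$ after absorbing the polynomial factors and the harmless denominator $1/((z-w')(w-z))$ — one keeps the contour tip pushed to $0$ so there is no small-denominator issue, since $V_{0,\varphi}$ and $V_{0,\pi-\varphi}$ only meet at the origin, and one may deform slightly to keep $z$ and $w$ a bounded distance apart, as in \cite{FV14,BCF14}.

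The second step is the trace-norm estimate itself. The difference $K'_{x,\infty}-\widetilde K'_{x,L}$, where $\widetilde K'_{x,L}$ denotes the extension by zero of $K'_{x,L}$ to the full contour, splits into two contributions: (i) the part where the \emph{outer} variables $w,w'$ range over the tail $|w|>L$ or $|w'|>L$ of $V_{0,\pi-\varphi}$, and (ii) the part where $w,w'$ are in the bounded piece but the \emph{inner} integration variable $z$ ranges over the tail $|z|>L$ of $V_{0,\varphi}$. Using the factorization $K'(w,w')=\int \phi_w(z)\psi_{w'}(z)\,\d z$ suggested by the kernel and bounding the Hilbert--Schmidt norms (which dominate the trace norm up to the standard composition trick, writing $K'=AB$ with $A,B$ Hilbert--Schmidt), both contributions are controlled by $\int_L^\infty e^{-c_1 t^3}\,\d t \le C e^{-c_1 L^3}$, which tends to $0$ as $N\to\infty$ since $L=\delta N^{1/3}$. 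Finiteness of $\|K'_{x,\infty}\|_1$ follows from the same Gaussian bound, so the determinant $\det(\id-K'_{x,\infty})_{L^2(V_{0,\pi-\varphi}^\infty)}$ is well-defined and the convergence follows.

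The main obstacle is purely a matter of bookkeeping rather than of substance: one must make sure the angle $\varphi$ (already fixed in Proposition~\ref{prop:kernelconv}) is in the range where $\cos(3\varphi)<0$, i.e.\ $\varphi\in(\pi/3,\,?)$, while simultaneously $\pi-\varphi$ gives $\Re(g(w))\to +\infty$ on the outer contour; both hold provided $\varphi\in(\pi/3,2\pi/3)$, and one should check this is consistent with whatever constraints on $\varphi$ were imposed earlier (it is, since $\varphi$ can be taken as close to $\pi/2$ as one likes). A secondary point is to confirm that no pole of $1/((z-w')(w-z))$ is crossed when one aligns the truncated and infinite contours; since the contours emanate from the common tip $0$ and can be taken nested/separated away from $0$, this is immediate. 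Thus the only real work is writing out the Gaussian tail bound carefully and plugging it into the standard trace-norm continuity estimate for Fredholm determinants.
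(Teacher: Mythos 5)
Your proposal is correct in substance, but it packages the argument differently from the paper. The paper's proof is a one-liner: since the integrand of \eqref{kernelK'} has cubic exponential decay along $V_{0,\pi-\varphi}^\infty$ and $V_{0,\varphi}^\infty$, the truncated Fredholm series converges to the untruncated one by dominated convergence (with Hadamard's bound supplying the summable dominating series), exactly as in the earlier localization steps. You instead extend $K'_{x,\delta N^{1/3}}$ by zero and estimate the difference of operators in trace norm, using the Lipschitz continuity of $A\mapsto\det(\id+A)$ and the factorization $K'=AB$ into Hilbert--Schmidt pieces to convert exponential tail bounds into trace-norm smallness. Both routes rest on the same essential input — the choice of angle making $\Re(\chi z^3/3)$ strictly negative on the $z$-ray and strictly positive on the $w$-ray — so yours is a legitimate alternative; its advantage is that it quantifies the error (exponentially small in $L^3=\delta^3 N$), while the paper's series-plus-dominated-convergence argument avoids any trace-class bookkeeping. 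Two caveats on your write-up: first, the kernel has \emph{no} exponential decay in the variable $w'$ (only the mild decay of $1/(z-w')$), so your stated pointwise bound $C\,e^{-c_1(|z|^3+|w|^3)/2}$ is imprecise as written — the decay is in $w$ alone after the $z$-integration — and it is precisely the $AB$ Hilbert--Schmidt factorization (or, in the paper's route, Hadamard's bound, which only needs decay in the row variable) that absorbs this asymmetry, so that step is doing real work and should not be waved through; second, the condition for decay on the ray of angle $\varphi$ is $\cos(3\varphi)<0$, i.e.\ $\varphi\in(\pi/6,\pi/2)$ given the constraint $\varphi\in(0,\pi/2)$ from Proposition~\ref{prop:localization}, not $\varphi>\pi/3$ — your threshold is on the safe side but not the sharp one, and since $\varphi$ is handed to you by the earlier propositions rather than freely chosen, the correct observation is that any $\varphi$ for which Propositions~\ref{prop:localization} and~\ref{prop:kernelconv} make sense already lies in this range.
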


\begin{proposition}\label{prop:rewritekernel}
We can rewrite the Fredholm determinant
\[\det(\id-K_{x,\infty}')_{L^2(V_{0,\pi-\varphi}^{\delta N^{1/3}})}=\det(\id-K_{\Ai,x})_{L^2(\R_+)}=F_{\rm GUE}(x)\]
where
\[K_{\Ai,x}(a,b)=\int_0^\infty\d\lambda\Ai(x+a)\Ai(x+b)\]
and $F_{\rm GUE}$ is the GUE Tracy--Widom distribution function.
\end{proposition}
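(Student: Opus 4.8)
The plan is to reduce $K'_{x,\infty}$ by an explicit affine change of variables to the standard contour-integral form of the shifted Airy kernel, and then to invoke the classical Tracy--Widom identity. (In the statement, $K_{\Ai,x}$ is to be read as $K_{\Ai,x}(a,b)=\int_0^\infty\Ai(\lambda+x+a)\Ai(\lambda+x+b)\,\d\lambda$, the Airy kernel shifted by $x$.)

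First I would rescale and complete the cube. Applying the change of variables $Z=\chi^{1/3}z+\frac{c\phi'}{2\chi^{2/3}}$ in the $z$-integral of \eqref{kernelK'}, together with the same affine change for the Fredholm-determinant variables $w,w'$, the shift is designed to annihilate the quadratic term $c\phi'z^2/2$, and by the very definition \eqref{defbeta} of $\beta_x$ the residual linear coefficient becomes exactly $-x$ (the constant $c^2(\phi')^2/(4\chi)$ in $\beta_x$ is precisely what completing the cube produces). A short check shows that the various Jacobian factors cancel, so the prefactor $\frac1{2\pi\I}$ is preserved and
\[\det(\id-K'_{x,\infty})_{L^2(V_{0,\pi-\varphi}^\infty)}=\det(\id-\wt K)\]
on $L^2$ of the rescaled V-contour, with
\[\wt K(W,W')=\frac1{2\pi\I}\int_{\wh V}\frac{\d Z}{(Z-W')(W-Z)}\,\frac{e^{Z^3/3-xZ}}{e^{W^3/3-xW}},\]
where $\wh V$ is the image of $V_{0,\varphi}^\infty$ and $W,W'$ run over the image of $V_{0,\pi-\varphi}^\infty$. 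The only singularities of the integrand in $Z$ are the simple poles at $Z=W$ and $Z=W'$, both on the left-opening contour, so $\wh V$ can be deformed, crossing no pole, to $V_{0,\varphi}^\infty$ for any $\varphi\in(\pi/6,\pi/2)$; then $e^{Z^3/3}$ decays along its arms and $e^{-W^3/3}$ decays along the arms of the $W$-contour.

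Next I would factor $\wt K$ through $L^2(\R_+)$. Perturbing the two V-contours so that they are separated near the origin --- the $Z$-contour in the open right half-plane, the $W,W'$-contour in the open left half-plane --- gives $\Re(Z-W')>0$ and $\Re(W-Z)<0$, whence
\[\frac1{Z-W'}=\int_0^\infty e^{-\lambda(Z-W')}\,\d\lambda,\qquad\frac1{W-Z}=-\int_0^\infty e^{\mu(W-Z)}\,\d\mu.\]
Substituting these, applying Fubini (justified by the cubic decay), and evaluating the $Z$-integral by the Airy representation $\Ai(y)=\frac1{2\pi\I}\int_{V_{0,\varphi}^\infty}e^{u^3/3-yu}\,\d u$, one obtains $\wt K=\pm A\,M\,B$ with $A(W,\mu)=e^{\mu W-W^3/3+xW}$, $M(\mu,\lambda)=\Ai(x+\lambda+\mu)$ and $B(\lambda,W')=e^{\lambda W'}$, all Hilbert--Schmidt on the perturbed contours. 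By the cyclic identity $\det(\id-AMB)=\det(\id-MBA)$ the determinant passes to $L^2(\R_+)$, and the remaining $W$-integral in $MBA$, computed once more by the Airy representation after the reflection $W\mapsto-W$, yields precisely $\int_0^\infty\Ai(x+\mu+\lambda)\Ai(x+\mu'+\lambda)\,\d\lambda=K_{\Ai,x}(\mu,\mu')$. Finally $\det(\id-K_{\Ai,x})_{L^2(\R_+)}=F_{\rm GUE}(x)$ is the classical Tracy--Widom identity~\cite{TW94}.

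The algebra of the affine reduction and the absolute-convergence bounds needed for Fubini and for the Hilbert--Schmidt property are routine. The step that requires real care is the bookkeeping around the factorisation: separating the two V-contours near their common tip at the origin without letting the $Z$-contour run into the poles at $Z=W,W'$, and keeping exact track of the orientations of the V-contours and of every factor of $2\pi\I$, so that the middle operator $M$ comes out to be $\Ai$ on the nose rather than a constant multiple of it. This is essentially the identification of the limiting kernel already performed in~\cite{BCF14,FV14}, so I would either reproduce that computation in the present normalisation or cite it directly.
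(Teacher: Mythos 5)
Your proposal is correct and is essentially the paper's own argument: both proofs represent the pole factor(s) as Laplace-type integrals over $\R_+$, use the cyclic property of the Fredholm determinant to move from the contour to $L^2(\R_+)$, evaluate the remaining contour integrals as Airy functions (you by completing the cube so that the definition \eqref{defbeta} of $\beta_x$ turns the linear coefficient into $-x$; the paper by a general cubic--Gaussian Airy formula followed by a harmless conjugation and rescaling), and conclude with the Tracy--Widom identity. The only differences are cosmetic bookkeeping (you factor both poles at once into $A\,M\,B$ instead of sequentially), and the sign/orientation care you explicitly flag is precisely the point where the paper, too, relies on the orientation convention fixed in Proposition~\ref{prop:localization}.
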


\section{Steep descent contours}\label{s:steep}

This section is devoted to establish contours which are of steep descent for the function with principal contribution in the exponent.
We follow the lines of the method of steepest descent with the slight generalization that our contours are not necessarily of steepest descent but of steep descent.
A finite contour $\gamma$ in the complex plane is of steep descent for a real function
if there is a unique point on $\gamma$ where the function attains its maximum over $\gamma$ and there is another point on $\gamma$ where the function is minimal,
furthermore, the function is monotone along both arcs connecting the two points.
We first define these contours which are also shown on Figure~\ref{fig:functioncontours} along with the contourplot of the function $\Re(f_0)$.

\begin{definition}\label{def:contours}
Let us define the contours along with their parametrizations as
\[\C_\theta=\{w(s)=1-(1-q^\theta)e^{\I s},s\in(-\pi,\pi]\},\qquad
\D_\theta=\{z(t)=q^\theta e^{\I t},t\in(-\pi,\pi]\}\]
and let $\wt\C_\theta$ be the image of $\C_\theta$ under the map $w\mapsto\log_q w$.
\end{definition}

\begin{figure}
\centering
\includegraphics[height=200pt]{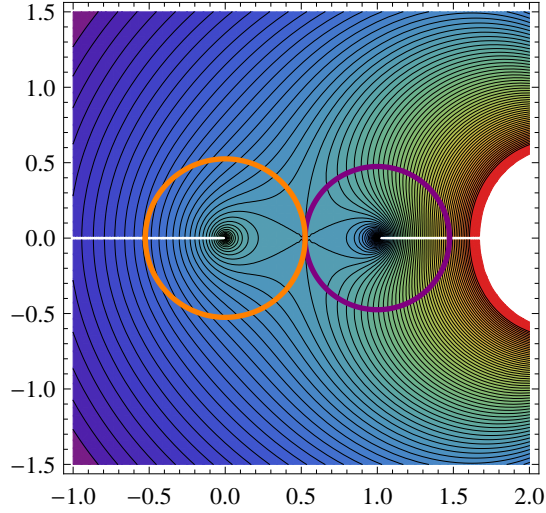}
\caption{Contourplot of the function $\Re(f_0)$ in the neighbourhood of the double critical point $\theta$ and the steep descent contours $\C_\theta$ (purple) and $\D_\theta$ (orange).
Parameter values: $\theta=0.4$, $q=0.2$, $\mu=0.4$, $\nu=0.3$.\label{fig:functioncontours}}
\end{figure}

The next two propositions are the main statements of the present section and the main technical tools for the proof of Theorem~\ref{thm:main}.
They are proved later in this section after the proof of Lemma~\ref{lemma:compareg}.

\begin{proposition}\label{prop:steep1}
Suppose that for the parameters of the $q$-Hahn TASEP, \eqref{munucond} holds.
Then the contour $\C_\theta$ is of steep descent for the function $-\Re(f_0)$
in the sense that the function attains its maximum at $q^\theta$ corresponding to $s=0$, it increases for $s\in(-\pi,0)$ and it decreases for $s\in(0,\pi)$.
\end{proposition}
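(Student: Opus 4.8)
The plan is to reduce the steep descent property to one scalar inequality on $\C_\theta$ and then to prove that inequality using \eqref{munucond} together with the fact that $q^\theta$ is a double critical point of $f_0$.

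Parametrize $\C_\theta$ by $w(s)=1-re^{\I s}$ with $r=1-q^\theta\in(0,1)$. Every factor $1-aw$ occurring in the series defining $f_0$ has $a\in\{q^k,\nu q^k,\mu q^k:k\ge0\}$, and on $\C_\theta$ one has $|1-aw(s)|\ge|1-a(2-q^\theta)|>0$ — this is positive because $a\le\mu\le1/2$ gives $a(2-q^\theta)<1$ when $a\ne1$, while for $a=1$ one has $|1-w|\equiv r$. Hence $f_0$ is real-analytic near $\C_\theta$, $\Re f_0(w(s))$ is even in $s$, and it is enough to study $s\in[0,\pi]$. Differentiating the series of real parts termwise, via $\Re\log(1-aw(s))=\tfrac12\log\bigl((1-a)^2+2a(1-a)r\cos s+a^2r^2\bigr)$ and $\Re\log w(s)=\tfrac12\log(1-2r\cos s+r^2)$, gives
\[
\frac{\d}{\d s}\Re f_0(w(s))=-r\sin s\cdot B(w(s)),\qquad
B(w)=\frac{f}{|w|^2}+\sum_{k\ge0}\left(\frac{(\kappa-1)\nu q^k(1-\nu q^k)}{|1-\nu q^kw|^2}-\frac{\kappa\mu q^k(1-\mu q^k)}{|1-\mu q^kw|^2}+\frac{q^k(1-q^k)}{|1-q^kw|^2}\right)
\]
after combining the two $\nu q^k$-contributions in \eqref{deff0}.

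Since $f$ and $\kappa$ are chosen in \eqref{deff}--\eqref{defkappa} precisely so that $f_0'(q^\theta)=f_0''(q^\theta)=0$, differentiating the displayed identity once more at $s=0$ and comparing with the chain rule yields $B(q^\theta)=(1-q^\theta)f_0''(q^\theta)-f_0'(q^\theta)=0$. Subtracting this vanishing value and using the elementary identities (valid on $\C_\theta$)
\[
\frac1{|w|^2}-\frac1{q^{2\theta}}=\frac{-2(\Re w-q^\theta)}{q^{2\theta}|w|^2},\qquad
\frac1{|1-aw|^2}-\frac1{(1-aq^\theta)^2}=\frac{2a(1-a)(\Re w-q^\theta)}{(1-aq^\theta)^2|1-aw|^2},
\]
I would pull out the factor $\Re w-q^\theta=r(1-\cos s)\ge0$ to get $B(w)=2(\Re w-q^\theta)\,C(w)$ with
\[
C(w)=-\frac{f}{q^{2\theta}|w|^2}+\sum_{k\ge0}\left(\frac{(\kappa-1)(\nu q^k)^2(1-\nu q^k)^2}{(1-\nu q^{\theta+k})^2|1-\nu q^kw|^2}-\frac{\kappa(\mu q^k)^2(1-\mu q^k)^2}{(1-\mu q^{\theta+k})^2|1-\mu q^kw|^2}+\frac{q^{2k}(1-q^k)^2}{(1-q^{\theta+k})^2|1-q^kw|^2}\right).
\]
On $\C_\theta$ we have $\Re w-q^\theta>0$ for $s\ne0$ and $\sin s$ changes sign only at $s=0$ and $s=\pi$; so the proposition reduces to the single claim $C(w)<0$ for all $w\in\C_\theta$. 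Indeed this forces $B(w(s))<0$ for $s\in(0,\pi)$, hence $\Re f_0(w(s))$ is strictly increasing on $[0,\pi]$ and, by evenness in $s$, strictly decreasing on $[-\pi,0]$, with the minimum at $s=\pi$ (i.e.\ $w=2-q^\theta$) — exactly the asserted monotonicity of $-\Re(f_0)$.

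The heart of the matter — and the only place where \eqref{munucond} is used — is the inequality $C(w)<0$ on $\C_\theta$, and the present argument genuinely needs the restriction (although by the KPZ scaling heuristics the statement is expected to hold more generally). The obstruction is that $\kappa>1$ and the $\mu q^k$-terms enter $C$ with a minus sign, so no term-by-term estimate will do: the negative mass has to be absorbed by the $\nu q^k$- and $q^k$-terms together with $-f/(q^{2\theta}|w|^2)$. The plan is to regroup the series so that each negative $\mu q^k$-term is matched against neighbouring positive terms, shifting the summation index when necessary — here $\nu\ge q$ makes $\nu q^k\ge q^{k+1}$, which aligns the $\nu q^k$- and $q^k$-families, and $q\le\nu<\mu\le1/2$ controls the ratios of the numerators and keeps the $\mu q^k$-denominators bounded away from $0$ along the whole of $\C_\theta$ — reducing everything to a short list of elementary rational inequalities in $\cos s\in[-1,1]$. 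I expect this regrouping to be the real difficulty, and it is natural to extract the comparison it requires as a separate statement, which is the role of Lemma~\ref{lemma:compareg}; one has to resist truncating the series, because near $w=2-q^\theta$ many of the denominators are simultaneously smallest, so the tail must also be controlled in order to get a bound uniform over $\C_\theta$.
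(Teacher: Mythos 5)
Your reduction is set up correctly: the formula for $B(w)$, the observation that $B(q^\theta)=(1-q^\theta)f_0''(q^\theta)-f_0'(q^\theta)=0$ (double criticality is indeed built into the definitions \eqref{defkappa}--\eqref{deff}), and the factorization $B(w)=2(\Re w-q^\theta)\,C(w)$ are all valid, so the proposition is equivalent to showing $C(w)<0$ along $\C_\theta$. But that is precisely where your argument stops. You do not prove $C(w)<0$; you only announce a plan ("regroup the series", "match each negative $\mu q^k$-term against neighbouring positive terms", "a short list of elementary rational inequalities in $\cos s$") and yourself call this regrouping "the real difficulty". Since this one inequality is where the hypothesis \eqref{munucond} and all of the analytic work enter, the heart of the proposition is missing; the factorization by $\Re w-q^\theta$ only exploits criticality at $s=0$ and by itself makes no progress toward the inequality at general $s$. (As a side remark, your description of the balancing is inverted: for $C<0$ the \emph{positive} $(\kappa-1)\nu q^k$- and $q^k$-terms must be dominated by the negative $\mu q^k$- and $f$-terms, which suggests the bookkeeping has not actually been carried out.)

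Moreover, the "separate statement" you gesture at is not the paper's Lemma~\ref{lemma:compareg}. Writing your terms in the notation $g(b,s)=b\sin s/(1+b^2-2b\cos s)$ with $b=-r\alpha q^k/(1-\alpha q^k)$, one checks that
\begin{equation*}
\frac{(\alpha q^k)^2(1-\alpha q^k)^2}{(1-\alpha q^{\theta+k})^2\,|1-\alpha q^k w|^2}=\frac1{r^2\sin s}\,\frac{b}{(1-b)^2}\,g(b,s),
\end{equation*}
so $r^2\sin s\,C(w)$ is a signed combination of the quantities $b\,g(b,s)/(1-b)^2$, whereas Lemma~\ref{lemma:compareg} controls the reciprocal weights $(1-b)^2g(b,s)/b$; you cannot simply cite it. The paper avoids your reduction altogether: it keeps the derivative in the form \eqref{deriv-f0}, applies the monotonicity of $b\mapsto(1-b)^2g(b,s)/b$ pairwise to the $\mu$-, $\nu$- and $q$-families in \eqref{newsum1}--\eqref{rest2}, and then verifies that the resulting coefficient of $g(r,s)$ is exactly $f$ — the identity \eqref{f=}, checked via the series \eqref{Psiseries}--\eqref{Psi'series} — which is how \eqref{munucond} and the definitions of $\kappa,f$ are actually used. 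To complete your route you would have to formulate and prove a new comparison inequality for the weights $b\,g(b,s)/(1-b)^2$ (whose monotonicity in $b$ is not obvious and is not supplied by the paper); as it stands, the proposal is a correct reformulation followed by an unproved claim.
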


\begin{proposition}\label{prop:steep2}
Suppose that for the parameters of the $q$-Hahn TASEP, \eqref{munucond} holds.
Then the contour $\D_\theta$ is of steep descent for the function $\Re(f_0)$
in the sense that the function attains its maximum at $q^\theta$ corresponding to $t=0$, it increases for $t\in(-\pi,0)$ and it decreases for $t\in(0,\pi)$.
\end{proposition}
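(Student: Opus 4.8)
\textbf{Proof plan for Proposition~\ref{prop:steep2}.}

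The plan is to work directly with the parametrization $z(t)=q^\theta e^{\I t}$ and to study the real function $t\mapsto\Re(f_0(z(t)))$ on $t\in(-\pi,\pi]$. By the evenness of $\Re(f_0)$ along $\D_\theta$ (which follows because $\overline{z(t)}=z(-t)$ and $f_0$ has real Taylor coefficients in the relevant sense, the $q$-Pochhammer products and the log term all being real on the real axis), it suffices to treat $t\in[0,\pi]$ and to show that $\frac{\d}{\d t}\Re(f_0(z(t)))\le0$ there, with equality only at $t=0$. First I would compute $\frac{\d}{\d t}\Re(f_0(z(t)))=\Re\bigl(\I z(t) f_0'(z(t))\bigr)=-\Im\bigl(z(t)f_0'(z(t))\bigr)$, so the task reduces to proving $\Im\bigl(z f_0'(z)\bigr)\ge0$ for $z=q^\theta e^{\I t}$, $t\in(0,\pi)$.

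Next I would expand $z f_0'(z)$ using \eqref{deff0}. Writing $z\frac{\d}{\d z}\log(az;q)_\infty=-\sum_{k\ge0}\frac{az q^k}{1-az q^k}$, one gets $z f_0'(z)$ as an explicit sum over $k\ge0$ of terms of the form $c_k(z q^k)$, where each $c_k$ is (up to the constants $-f$, $\kappa$, $1$ and the shifts $\nu,\mu$) a combination of elementary functions $u\mapsto \frac{u}{1-u}$ evaluated at $u=zq^k$, $u=\nu z q^k$, $u=\mu z q^k$. The key elementary fact is that for $u=re^{\I t}$ with $0<r<1$ and $t\in(0,\pi)$, $\Im\frac{u}{1-u}=\frac{r\sin t}{|1-u|^2}>0$, so each such building block contributes positively; the subtlety is the sign and size of the coefficients $\kappa$, $-f$ and the interplay between the $\nu$-terms and the $\mu$-terms. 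I would collect, for each $k$, the coefficient of the common factor $\sin t$ and show it is nonnegative using the technical condition \eqref{munucond} ($q\le\nu<\mu\le1/2$) together with the explicit expressions \eqref{defkappa}--\eqref{deff} for $\kappa$ and $f$ in terms of the $q$-digamma function. Concretely $f$ and $\kappa$ are built from $\Psi_q,\Psi_q'$ at the three arguments $\theta,\theta+\log_q\mu,\theta+\log_q\nu$, and $\Psi_q'(z)=(\log q)^2\sum_{k\ge0}\frac{q^{k+z}}{(1-q^{k+z})^2}$ etc., so the whole inequality becomes a double-sum comparison in the variable $q^k$ that I expect to telescope or to follow from termwise monotonicity of $u\mapsto\frac{u}{(1-u)^2}$ versus $u\mapsto\frac{u}{1-u}$.

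The main obstacle is precisely this combinatorial/analytic bookkeeping: verifying that after inserting the $q$-digamma expressions for $\kappa$ and $f$, the coefficient of $\sin t$ in $\Im(zf_0'(z))$ is nonnegative for every $z=q^\theta e^{\I t}$ in the full parameter range allowed by \eqref{munucond}. I expect this to be where condition \eqref{munucond} genuinely enters (as flagged in Remark~\ref{rem:munucond}): without $\nu\ge q$ and $\mu\le1/2$ some denominators $|1-zq^k|$, $|1-\mu z q^k|$, $|1-\nu zq^k|$ can become small enough that a negative contribution from the $-f\log z$ term or from the $\kappa$-weighted $\mu$-term is not dominated. A clean way to organize the endgame is to reduce, via Lemma~\ref{lemma:compareg} and the comparison of $\Re(f_0)$ on $\D_\theta$ with that on $\wt\C_\theta$ (equivalently $\C_\theta$), to Proposition~\ref{prop:steep1}, exploiting that $\theta$ is a double critical point of $f_0$ so that $f_0'(q^\theta)=0=f_0''(q^\theta)$ pins down the local behaviour near $t=0$ and the global monotonicity then follows from the sign analysis above together with the behaviour at $t=\pi$, where $z=-q^\theta$ lies on the negative real axis and $\Re(f_0)$ attains its minimum by the same positivity of the summands integrated from $0$ to $\pi$.
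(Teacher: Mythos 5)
Your setup coincides with the paper's first step: along $\D_\theta$ one has $\frac{\d}{\d t}\Re f_0(z(t))=-\Im\bigl(z(t)f_0'(z(t))\bigr)$, and expanding the $q$-Pochhammer logarithms turns this into a sum of terms $g(b,t)=\frac{b\sin t}{1+b^2-2b\cos t}$, which is exactly \eqref{derivf0}. (One small sign that you did not push the computation through: the $-f\log z$ term contributes nothing here, since $|z(t)|=q^\theta$ is constant on $\D_\theta$, so there is no ``negative contribution from the $-f\log z$ term'' to worry about — in contrast to the contour $\C_\theta$.) The genuine gap is in the step you describe as ``collect the coefficient of $\sin t$ and expect it to telescope or follow from termwise monotonicity of $u\mapsto u/(1-u)^2$ versus $u\mapsto u/(1-u)$.'' The coefficients of $\sin t$ are $t$-dependent (the denominators $1+b^2-2b\cos t$ differ term by term), so no common factor can be pulled out, nothing telescopes, and plain monotonicity is not enough. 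What the paper actually needs is: (i) the identity \eqref{1=}, which rewrites the coefficient $1$ of $g(q^\theta,t)$ through the definition \eqref{defkappa} of $\kappa$ and the series \eqref{Psi'series}, and (ii) the quantitative termwise inequality \eqref{cauchy} for $0<\alpha\le\beta\le 1/2$. The proof of \eqref{cauchy} is the real content: by Cauchy's mean value theorem the relevant difference quotient equals $\bigl(\frac{(1-b)^2}{b}g(b,t)\bigr)^2/\sin t$ at some intermediate $b<\beta q^\theta\le q^\theta/2$, and one then needs \emph{both} parts of Lemma~\ref{lemma:compareg}: the monotonicity to push $b$ up to $q^\theta/2$, and the second (``half-point'') inequality $\bigl(\frac{(1-b/2)^2}{b/2}g(b/2,t)\bigr)^2\ge\frac{(1-b)^2}{b}g(b,t)\sin t$, which is precisely where $\mu\le 1/2$ enters and whose constant is close to optimal. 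Your plan never identifies this inequality, and without it the domination of $g(q^\theta,t)$ by the $k$-sum is unproven.

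The fallback you sketch — reducing to Proposition~\ref{prop:steep1} by comparing $\Re(f_0)$ on $\D_\theta$ with its values on $\wt\C_\theta$ — does not work either: Proposition~\ref{prop:steep1} asserts steep descent of $-\Re(f_0)$ along a different curve (a circle centred at $1$), and monotonicity along one contour implies nothing about monotonicity along the other; the two propositions require separate (and differently weighted) sign analyses. Likewise, the fact that $\theta$ is a double critical point only controls the local cubic behaviour near $t=0$ and cannot yield the global monotonicity on $(0,\pi)$. So the proposal correctly reproduces the easy reduction to a sum of $g$-terms but is missing the key comparison mechanism (\eqref{1=} together with \eqref{cauchy} and the second part of Lemma~\ref{lemma:compareg}) that makes the argument go through.
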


\begin{remark}\label{rem:munucond}
The condition $q\le\nu$ is needed for the use of the first part of Lemma~\ref{lemma:compareg} to obtain \eqref{newsum2} which is an ingredient to the steep descent property in Proposition~\ref{prop:steep1}.
This condition is also used in the proof of Proposition~\ref{prop:steep2} for the application of \eqref{cauchy} with $\alpha=\nu q^k$ and $\beta=q^{k+1}$.
The condition $\mu\le1/2$ is imposed in Proposition~\ref{prop:steep2}, because the inequality \eqref{cauchy} could be proved for $\beta\le1/2$.
The $1/2$ seems numerically to be close to optimal.
The condition $\mu\le1/2$ could be weakened in Proposition~\ref{prop:steep1}, because $(1-q^\theta)\mu/(1-\mu)\le1$ is enough to obtain \eqref{newsum1} from the first part of Lemma~\ref{lemma:compareg}.
The latter is a weaker condition, but $\mu\le1/2$ has to be assumed for our proof of Proposition~\ref{prop:steep2} to work.
\end{remark}

The function given by
\[g(b,s)=\frac{b\sin s}{1+b^2-2b\cos s}\]
is useful for the proof of the propositions about steep descent contours.
It has the following properties.

\begin{lemma}\label{lemma:compareg}
\begin{enumerate}
\item
If $-1<b\le c<1$ and $0\le s\le\pi$, then
\begin{equation}\label{compareg}
\frac{(1-b)^2}b g(b,s)\ge \frac{(1-c)^2}c g(c,s)
\end{equation}
and the inequality above is sharp for $s\in(0,\pi)$ if $b<c$.

\item
For $0<b<1$ and $0\le t\le\pi$, one has
\[\left(\frac{(1-b/2)^2}{b/2} g(b/2,t)\right)^2\ge\frac{(1-b)^2}b g(b,t)\sin t.\]
with strict inequality for $t\in(0,\pi)$.
\end{enumerate}
\end{lemma}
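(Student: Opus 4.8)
Both inequalities are about the rational function $g(b,s)=\frac{b\sin s}{1+b^2-2b\cos s}$, which (up to a factor) is the imaginary part of $\frac{1}{1-be^{\I s}}$: indeed $\frac{1}{1-be^{\I s}}=\frac{1-b\cos s+\I b\sin s}{1+b^2-2b\cos s}$, so $g(b,s)=\Im\frac{1}{1-be^{\I s}}$. The plan is to treat the two parts separately, but in each case to reduce the claimed inequality to an elementary inequality between trigonometric polynomials in $\cos s$ (resp.\ $\cos t$), cleared of denominators, and then to factor out the obvious vanishing at $s=0$ (resp.\ $t=0$) and the boundary behaviour. I will write $c=\cos s$ throughout the first part and $c=\cos t$ in the second so that $\sin^2 = 1-c^2=(1-c)(1+c)$ and $1+b^2-2b c = (1-b)^2 + 2b(1-c)>0$.

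\textbf{Part (1).} Since $\sin s\ge 0$ on $[0,\pi]$, the inequality $\frac{(1-b)^2}{b}g(b,s)\ge\frac{(1-c)^2}{c}g(c,s)$ is, after cancelling $\sin s$ and clearing the (positive) denominators, equivalent to
\[
(1-b)^2\bigl(1+c^2-2c\cos s\bigr)\ \ge\ (1-c)^2\bigl(1+b^2-2b\cos s\bigr),
\]
where here I momentarily risk a clash of letters, so let me instead write the second parameter as $c$ and the cosine variable as $u=\cos s$; the inequality to prove is
\[
(1-b)^2\bigl(1+c^2-2cu\bigr)\ \ge\ (1-c)^2\bigl(1+b^2-2bu\bigr)\qquad(-1<b\le c<1,\ -1\le u\le 1).
\]
Both sides are affine (hence monotone) in $u$, so it suffices to check the two endpoints $u=1$ and $u=-1$. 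At $u=1$ both sides equal $(1-b)^2(1-c)^2$, giving equality — which, after restoring the factor $\sin s$, is exactly the boundary point $s=0$. At $u=-1$ the inequality reads $(1-b)^2(1+c)^2\ge(1-c)^2(1+b)^2$, i.e.\ $(1-b)(1+c)\ge(1-c)(1+b)$ (all factors positive), i.e.\ $2(c-b)\ge0$, which holds. Since a nonzero affine function attains its minimum strictly at an endpoint, for $b<c$ the inequality is strict for $u\in(-1,1)$, i.e.\ for $s\in(0,\pi)$; this gives the sharpness claim. (One should note the endpoint $u=1$ is reached only at $s=0$ and $u=-1$ only at $s=\pi$, so on the open interval the relevant convex combination is genuinely interior.)

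\textbf{Part (2).} Write $u=\cos t\in[-1,1]$, $D=1+b^2-2bu>0$ and $D_{1/2}=1+b^2/4-bu>0$. The claim
\[
\Bigl(\tfrac{(1-b/2)^2}{b/2}\,g(b/2,t)\Bigr)^2\ \ge\ \tfrac{(1-b)^2}{b}\,g(b,t)\,\sin t
\]
expands, using $g(b/2,t)=\frac{(b/2)\sin t}{D_{1/2}}$ and $g(b,t)=\frac{b\sin t}{D}$ and cancelling one positive power of $\sin t$ together with common positive factors, into an inequality of the form
\[
\frac{(1-b/2)^4}{D_{1/2}^2}\,\sin t\ \ge\ \frac{(1-b)^2}{D}\,;
\]
clearing denominators and using $\sin^2 t = 1-u^2$ gives the polynomial inequality
\[
(1-b/2)^8\,(1-u^2)\,D^2\ \ge\ (1-b)^4\,D_{1/2}^4,
\]
which I would verify as a polynomial inequality in $u\in[-1,1]$ for fixed $b\in(0,1)$. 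The hard part will be this step: unlike Part (1) the expression is not affine in $u$, so the endpoint reduction does not apply verbatim, and one genuinely has to analyse a quartic (or higher) in $u$. My intended route is to check the two endpoints first — at $u=1$ the left side vanishes while the right side is $(1-b)^4(1-b/2)^4>0$, so one must be careful: \emph{the inequality as I wrote it is false at $u=1$}, which signals that I have mishandled a sign of $\sin t$. The correct reading is that the inequality only claims $\ge$ with the factor $\sin t\ge0$ present, and at $t=0$ (i.e.\ $u=1$) \emph{both} original sides vanish (each $g$ carries a $\sin t$), so one must cancel $\sin^2 t$ from the left and only $\sin^2 t$ — not $\sin^3 t$ — overall; redoing the bookkeeping, the left side has $\sin^2 t$ and the right side has $\sin^2 t$ as well (one from $g(b,t)$, one from the explicit $\sin t$... no: $g(b,t)$ has $\sin t$ and there is an extra $\sin t$, total $\sin^2 t$; the left has $g(b/2,t)^2$ with $\sin^2 t$). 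So after cancelling $\sin^2 t$ the inequality becomes
\[
\frac{(1-b/2)^4}{D_{1/2}^2}\ \ge\ \frac{(1-b)^2}{D},
\]
a bona fide inequality with no surviving $\sin t$, equivalent to the polynomial statement $(1-b/2)^4 D\ge(1-b)^2 D_{1/2}^2$. This is now polynomial of degree $1$ in $u$ on the left and degree $2$ on the right, so the plan is: (i) at $u=1$, $D=(1-b)^2$ and $D_{1/2}=(1-b/2)^2$, so both sides equal $(1-b/2)^4(1-b)^2$ — equality, matching the strictness exclusion at $t=0$; (ii) show the difference $(1-b/2)^4 D-(1-b)^2 D_{1/2}^2$, viewed as a polynomial in $u$, is positive on $[-1,1)$. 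Since $D$ is affine and $D_{1/2}^2$ is a concave-up quadratic in $u$ with the coefficient of $u^2$ equal to $b^2$, the difference is a downward parabola in $u$ (leading coefficient $-(1-b)^2 b^2<0$) that vanishes at $u=1$; it is therefore positive to the left of $u=1$ provided its other root exceeds $1$, equivalently provided the derivative at $u=1$ is negative. Computing that derivative and checking it is $<0$ for all $b\in(0,1)$ is a short explicit calculation (this is the residual "hard part," but it is now a one-variable check); the strictness for $t\in(0,\pi)$ then follows because the parabola vanishes only at $u=1$ and the other root is $>1$.

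\textbf{Summary of obstacles.} Part (1) is essentially immediate by affine-in-$\cos s$ monotonicity. Part (2) requires carefully accounting for the powers of $\sin t$ — this is where a naive reduction produces a false-looking statement — and then a single-variable analysis of a quadratic in $\cos t$; the crux is verifying that the relevant parabola has its non-trivial root outside $[-1,1]$, which reduces to one explicit sign check of a polynomial in $b$ on $(0,1)$.
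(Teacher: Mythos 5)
Your Part (1) is correct, and it takes a genuinely different (equally elementary) route from the paper's: after cancelling $\sin s\ge 0$ and the positive denominators you reduce \eqref{compareg} to an inequality between two functions that are affine in $u=\cos s$ and check the endpoints $u=\pm1$, whereas the paper simply differentiates $\frac{(1-b)^2}{b}g(b,s)$ in $b$ and observes that the derivative equals $-\frac{2(1-b^2)(1-\cos s)\sin s}{(1+b^2-2b\cos s)^2}\le0$, with strict inequality for $s\in(0,\pi)$. Both arguments yield the monotonicity in $b$ and the sharpness. In Part (2), after your self-correction of the $\sin t$ bookkeeping, you also land on the right reduced statement: with $u=\cos t$, $D=1+b^2-2bu$, $D_{1/2}=1+b^2/4-bu$, the claim is equivalent (for $t\in(0,\pi)$, after cancelling $\sin^2t$) to $P(u):=(1-b/2)^4D-(1-b)^2D_{1/2}^2\ge0$, where $P$ is a downward parabola in $u$ vanishing at $u=1$.

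The finishing step you propose for Part (2), however, has a genuine gap. You assert that the parabola "is positive to the left of $u=1$ provided its other root exceeds $1$, equivalently provided the derivative at $u=1$ is negative." Both halves are wrong in direction or strength: writing $P(u)=-k(u-1)(u-u_0)$ with $k=(1-b)^2b^2>0$, the other root exceeding $1$ is equivalent to $P'(1)>0$ and makes $P$ \emph{negative} to the left of $1$; conversely $P'(1)<0$ only gives $u_0<1$, i.e.\ positivity in some left neighbourhood of $1$, and does not exclude $u_0\in(-1,1)$, where $P$ would turn negative --- which is exactly what must be ruled out. So the one-variable check you plan to perform (the sign of $P'(1)$, which is indeed negative) would not complete the proof. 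The repair is short and should replace that step: evaluate $P$ at the other endpoint, $P(-1)=(1-b/2)^4(1+b)^2-(1-b)^2(1+b/2)^4=\frac{b^3}{4}(4-3b^2)>0$ for $b\in(0,1)$; since $P$ is concave with $P(1)=0$ and $P(-1)>0$, it is strictly positive on $(-1,1)$, which gives the inequality with strictness for $t\in(0,\pi)$. For comparison, the paper proves Part (2) in one stroke by factoring the difference of the two sides explicitly, with numerator proportional to $4b-3b^3+8(1-b)^2(1+\cos t)$, manifestly positive; your $P(-1)$ is precisely that numerator at $\cos t=-1$, up to a positive factor.
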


\begin{remark}
The first part of Lemma~\ref{lemma:compareg} is designed to compare the different terms in the derivative of $f_0$ along a circular contour given in \eqref{deriv-f0}.
The inequality \eqref{compareg} is sharp in the sense that the two sides as functions of $s$ are tangential at $s=0$.
The factor $1/2$ in the second part of the lemma seems numerically to be close to optimal.
\end{remark}

\begin{proof}[Proof of Lemma~\ref{lemma:compareg}]
\begin{enumerate}
\item
The inequality is the consequence of the fact that
\[\frac{\partial}{\partial b}\left(\frac{(1-b)^2}b g(b,s)\right)=-\frac{2(1-b^2)(1-\cos s)\sin s}{(1+b^2-2b\cos s)^2}\]
is non-positive for $-1<b<1$ and $0\le s\le\pi$ and it is strictly negative for $-1<b<1$ and $0<s<\pi$.

\item
Calculations show that
\begin{multline*}
\left(\frac{(1-b/2)^2}{b/2} g(b/2,t)\right)^2-\frac{(1-b)^2}b g(b,t)\sin t\\
=\frac{4b^2(4b-3b^3+8(1-b)^2(1+\cos t))\sin^2(t/2)\sin^2 t}{(4+b^2-4b\cos t)^2(1+b^2-2b\cos t)}
\end{multline*}
which is non-negative for the given parameter values and strictly positive for for $t\in(0,\pi)$.
\end{enumerate}
\end{proof}

The series representations of the $q$-digamma function and that of its derivative are used in the proof below.
They are expressed as
\begin{align}
\Psi_q(Z)&=-\log(1-q)+\log q\sum_{k=0}^\infty \frac{q^{Z+k}}{1-q^{Z+k}},\label{Psiseries}\\
\Psi_q'(Z)&=(\log q)^2\sum_{k=0}^\infty \frac{q^{Z+k}}{(1-q^{Z+k})^2}.\label{Psi'series}
\end{align}

\begin{proof}[Proof of Proposition~\ref{prop:steep1}]
We investigate the function $-\Re(f_0)$ along $\C_\theta$, i.e.\ $w(s)=1-re^{\I s}$ for $s\in[0,\pi]$ where we use the notation $r=1-q^\theta$ as we will do throughout this proof.
The case $s\in(-\pi,0]$ is similar.
One can check by calculation that
\begin{equation}\label{derivlogw}
\Re\frac{\d}{\d s}\log w(s)=g(r,s)
\end{equation}
and that for any $0<\alpha\le1$,
\begin{equation}\label{derivlogPoch}
\Re\frac{\d}{\d s}\log(\alpha w(s);q)_\infty=\sum_{k=0}^\infty g\left(-r\frac{\alpha q^k}{1-\alpha q^k},s\right)
\end{equation}
where the $k=0$ term for $\alpha=1$ is understood as the limit $\lim_{b\to-\infty}g(b,s)=0$.
Using \eqref{derivlogw}--\eqref{derivlogPoch} for \eqref{deff0}, we have
\begin{equation}\label{deriv-f0}\begin{aligned}
-\Re\frac{\d}{\d s} f_0(w(s))=fg(r,s)&+\kappa\sum_{k=0}^\infty\left(g\left(-r\frac{\mu q^k}{1-\mu q^k},s\right)-g\left(-r\frac{\nu q^k}{1-\nu q^k},s\right)\right)\\
&+\sum_{k=0}^\infty\left(g\left(-r\frac{\nu q^k}{1-\nu q^k},s\right)-g\left(-r\frac{q^k}{1-q^k},s\right)\right).
\end{aligned}\end{equation}
Note that the $k=0$ term of the last summand is $0$ by the observation above.

The first part of Lemma~\ref{lemma:compareg} for $b=-r\mu q^k/(1-\mu q^k)$ and $c=-r\nu q^k/(1-\nu q^k)$ yields that
\begin{equation}\label{newsum1}
\sum_{k=0}^\infty\left(\frac{\left(1+r\frac{\mu q^k}{1-\mu q^k}\right)^2}{r\frac{\mu q^k}{1-\mu q^k}}\frac{r\frac{\nu q^k}{1-\nu q^k}}{\left(1+r\frac{\nu q^k}{1-\nu q^k}\right)^2}
g\left(-r\frac{\mu q^k}{1-\mu q^k},s\right)-g\left(-r\frac{\nu q^k}{1-\nu q^k},s\right)\right)\le0
\end{equation}
and the inequality is strict for $s\in(0,\pi)$.
Similarly for $b=-r\nu q^k/(1-\nu q^k)$ and $c=-rq^{k+1}/(1-q^{k+1})$, one has
\begin{equation}\label{newsum2}
\sum_{k=0}^\infty\left(\frac{\left(1+r\frac{\nu q^k}{1-\nu q^k}\right)^2}{r\frac{\nu q^k}{1-\nu q^k}}\frac{r\frac{q^{k+1}}{1-q^{k+1}}}{\left(1+r\frac{q^{k+1}}{1-q^{k+1}}\right)^2}
g\left(-r\frac{\nu q^k}{1-\nu q^k},s\right)-g\left(-r\frac{q^{k+1}}{1-q^{k+1}},s\right)\right)\le0.
\end{equation}
Note that since the function $b\mapsto(1+b)^2/b$ is decreasing on $b\in(0,1)$,
the prefactor of $g(-r\mu q^k/(1-\mu q^k),s)$ in \eqref{newsum1} and that of $g(-r\nu q^k/(1-\nu q^k),s)$ in \eqref{newsum2} are smaller than $1$.

Hence in order to complete the argument, one has to compare the remainder of the terms $g(-r\mu q^k/(1-\mu q^k),s)$ and $g(-r\nu q^k/(1-\nu q^k),s)$ in \eqref{deriv-f0} with $g(r,s)$.
To this end, we use the first part of Lemma~\ref{lemma:compareg} for $b=-r\mu q^k/(1-\mu q^k)$ and $c=r$ which gives us that
\begin{equation}\label{rest1}
\frac{(1-r)^2}r\frac{r\frac{\mu q^k}{1-\mu q^k}}{\left(1+r\frac{\mu q^k}{1-\mu q^k}\right)^2}g(r,s)+g\left(-r\frac{\mu q^k}{1-\mu q^k},s\right)\le0
\end{equation}
and for $b=-r\nu q^k/(1-\nu q^k)$ and $c=r$, we have
\begin{equation}\label{rest2}
\frac{(1-r)^2}r\frac{r\frac{\nu q^k}{1-\nu q^k}}{\left(1+r\frac{\nu q^k}{1-\nu q^k}\right)^2}g(r,s)+g\left(-r\frac{\nu q^k}{1-\nu q^k},s\right)\le0.
\end{equation}

What remains to show is that
\begin{equation}\label{f=}\begin{aligned}
f&=\kappa\sum_{k=0}^\infty\left(1-\frac{\left(1+r\frac{\mu q^k}{1-\mu q^k}\right)^2}{r\frac{\mu q^k}{1-\mu q^k}}\frac{r\frac{\nu q^k}{1-\nu q^k}}{\left(1+r\frac{\nu q^k}{1-\nu q^k}\right)^2}\right)
\frac{(1-r)^2}r\frac{r\frac{\mu q^k}{1-\mu q^k}}{\left(1+r\frac{\mu q^k}{1-\mu q^k}\right)^2}\\
&\qquad+\sum_{k=0}^\infty\left(1-\frac{\left(1+r\frac{\nu q^k}{1-\nu q^k}\right)^2}{r\frac{\nu q^k}{1-\nu q^k}}\frac{r\frac{q^{k+1}}{1-q^{k+1}}}{\left(1+r\frac{q^{k+1}}{1-q^{k+1}}\right)^2}\right)
\frac{(1-r)^2}r\frac{r\frac{\nu q^k}{1-\nu q^k}}{\left(1+r\frac{\nu q^k}{1-\nu q^k}\right)^2}
\end{aligned}\end{equation}
because of the following.
Let us multiply \eqref{rest1} by $\kappa$ times the factor between parentheses in the first sum of \eqref{f=}
and multiply \eqref{rest2} by the factor between parentheses in the second sum of \eqref{f=} and sum these up for $k$.
Then add $\kappa$ times \eqref{newsum1} and \eqref{newsum2} to the sum.
This altogether is to be compared to \eqref{deriv-f0}.
Note that the coefficients of $g(-r\mu q^k/(1-\mu q^k),s)$, $g(-r\nu q^k/(1-\nu q^k),s)$ and $g(-rq^k/(1-q^k),s)$ coincide.
(Remember that the last term for $k=0$ in \eqref{deriv-f0} is $0$.)
On the other hand, the coefficients of $g(r,s)$ are exactly the two sides of \eqref{f=},
therefore if \eqref{f=} holds true, then the derivative \eqref{deriv-f0} is non-positive for $s\in[0,\pi]$ and negative for $s\in(0,\pi)$.

By multiplication in \eqref{f=}, for $\alpha\in(0,1]$, we get terms of the form
\begin{equation}\label{alphasum}\begin{aligned}
\sum_{k=0}^\infty \frac{(1-r)^2\frac{\alpha q^k}{1-\alpha q^k}}{\left(1+r\frac{\alpha q^k}{1-\alpha q^k}\right)^2}
&=-(1-q^\theta)\sum_{k=0}^\infty \frac{\alpha q^{\theta+k}}{(1-\alpha q^{\theta+k})^2}+\sum_{k=0}^\infty \frac{\alpha q^{\theta+k}}{1-\alpha q^{\theta+k}}\\
&=-(1-q^\theta)\frac{\Psi'_q(\theta+\log_q\alpha)}{(\log q)^2}+\frac{\Psi_q(\theta+\log_q\alpha)+\log(1-q)}{\log q}
\end{aligned}\end{equation}
where we used that $r=1-q^\theta$ and the series expansions \eqref{Psiseries}--\eqref{Psi'series}.
The right-hand side of \eqref{f=} equals $\kappa$ times the difference of \eqref{alphasum} for $\alpha=\mu$ and for $\alpha=\nu$ and the difference of \eqref{alphasum} for $\alpha=\nu$ and for $\alpha=q$.
Note that the sum in \eqref{alphasum} for $\alpha=q$ can be replaced by the one for $\alpha=1$, because the extra term is $0$ as it can be seen from the second expression in \eqref{alphasum}.
Hence the right-hand side of \eqref{f=} can be written as
\begin{multline*}
\frac1{\log q}\left(\kappa\left(\Psi_q(\theta+\log_q\mu)-\Psi_q(\theta+\log_q\nu)\right)+\Psi_q(\theta+\log_q\nu)-\Psi_q(\theta)\right)\\
-\frac{1-q^\theta}{(\log q)^2}\left(\kappa\left(\Psi'_q(\theta+\log_q\mu)-\Psi'_q(\theta+\log_q\nu)\right)+\Psi'_q(\theta+\log_q\nu)-\Psi'_q(\theta)\right)
\end{multline*}
which is exactly $f$ by \eqref{defkappa}--\eqref{deff} as required.
\end{proof}

\begin{proof}[Proof of Proposition~\ref{prop:steep2}]
First note that
\[\Re\frac{\d}{\d t}\log z(t)=0\]
and that for $0<\alpha\le1$,
\[\Re\frac{\d}{\d t}\log(\alpha z(t);q)_\infty=\sum_{k=0}^\infty g(\alpha q^{\theta+k},t).\]
We write the derivative of $\Re(f_0)$ as
\begin{equation}\label{derivf0}
\Re\frac{\d}{\d t} f_0(z(t))
=g(q^\theta,t)-\sum_{k=0}^\infty \left[\kappa\left(g(\mu q^{\theta+k},t)-g(\nu q^{\theta+k},t)\right)+g(\nu q^{\theta+k},t)-g(q^{\theta+k+1},t)\right].
\end{equation}
We prove that it is non-positive for $t\in[0,\pi]$ with strict inequality for $t\in(0,\pi)$.
The corresponding inequality for $t\in[-\pi,0]$ is similar.

As a consequence of \eqref{defkappa} and \eqref{Psi'series}, one can write the coefficient of $g(q^\theta,t)$ in the derivative above as
\begin{equation}\label{1=}
1=\frac{(1-q^\theta)^2}{q^\theta}\sum_{k=0}^\infty \left(\kappa\left(\frac{\mu q^{\theta+k}}{(1-\mu q^{\theta+k})^2}-\frac{\nu q^{\theta+k}}{(1-\nu q^{\theta+k})^2}\right)
+\frac{\nu q^{\theta+k}}{(1-\nu q^{\theta+k})^2}-\frac{q^{\theta+k+1}}{(1-q^{\theta+k+1})^2}\right).
\end{equation}
We shall prove that if $0<\alpha\le\beta\le1/2$, then
\begin{equation}\label{cauchy}
\left(\frac{\beta q^\theta}{(1-\beta q^\theta)^2}-\frac{\alpha q^\theta}{(1-\alpha q^\theta)^2}\right)\frac{(1-q^\theta)^2}{q^\theta} g(q^\theta,t)\le g(\beta q^\theta,t)-g(\alpha q^\theta,t)
\end{equation}
with strict inequality for $t\in(0,\pi)$ and $\alpha<\beta$.
An application of \eqref{cauchy} for $\alpha=\nu q^k$ and $\beta=\mu q^k$ and an application for $\alpha=\nu q^k$ and $\beta=q^{k+1}$ together with \eqref{1=}
proves that \eqref{derivf0} is non-positive.

To show \eqref{cauchy}, we write it as
\begin{equation}\label{cauchy2}
\frac{(1-q^\theta)^2}{q^\theta} g(q^\theta,t)\le \frac{g(\beta q^\theta,t)-g(\alpha q^\theta,t)}{\frac{\beta q^\theta}{(1-\beta q^\theta)^2}-\frac{\alpha q^\theta}{(1-\alpha q^\theta)^2}}
\end{equation}
and we observe that by Cauchy's mean value theorem the right-hand side of \eqref{cauchy2} can be written as
\begin{equation}\label{derivratio}
\frac{\d}{\d b}g(b,t)\left(\frac{\d}{\d b} \frac b{(1-b)^2}\right)^{-1}=\left(\frac{(1-b)^2}b g(b,t)\right)^2\frac1{\sin t}
\end{equation}
for some $b\in(\alpha q^\theta,\beta q^\theta)$.
By the first part of Lemma~\ref{lemma:compareg}, $b\mapsto(1-b)^2 g(b,t)/b$ is decreasing.
Since $\beta\le1/2$, \eqref{derivratio} for $b\in(\alpha q^\theta,\beta q^\theta)$ cannot be smaller than its value at $q^\theta/2$, i.e.
\[\left(\frac{(1-q^\theta/2)^2}{q^\theta/2}g(q^\theta/2,t)\right)^2\frac1{\sin t}.\]
Applying the second part of Lemma~\ref{lemma:compareg} gives the inequality \eqref{cauchy2} which completes the proof.
\end{proof}

\section{Proofs of propositions}\label{s:prop}

This section contains the proofs of the propositions stated in Section~\ref{s:analysis} which lead to Theorem~\ref{thm:Fredholmconv}.
For later use, note that differentiation of \eqref{deff0}--\eqref{deff2} gives the following Taylor series expansions
\begin{align}
f_0(q^Z)&=f_0(q^\theta)+\frac\chi3(Z-\theta)^3+\O((Z-\theta)^4),\label{f0Taylor}\\
f_1(q^Z)&=f_1(q^\theta)+\frac{c\phi'}2(Z-\theta)^2+\O((Z-\theta)^3),\label{f1Taylor}\\
f_2(q^Z)&=f_2(q^\theta)+\beta_x(Z-\theta).\label{f2Taylor}
\end{align}

\begin{proof}[Proof of Proposition~\ref{prop:localization}]
The proof consists of the following three steps.
We first deform the integration contour for the kernel $K_x$ to the steep descent contour.
Then we show that Proposition~\ref{prop:localization} holds with $\varphi=\pi/2$ instead of $\varphi\in(0,\pi/2)$.
In the last step, we deform the short contours so that we get the statement for $\varphi\in(0,\pi/2)$.

\emph{Step 1: Contour deformation.}
The first observation is that as long as conditions \eqref{munucond}--\eqref{thetacond} hold, then
\[\det(\id+K_x)_{L^2(C_0)}=\det(\id+K_x)_{L^2(\wt\C_\theta)},\]
i.e.\ the contour $C_0$ can be blowed up to $\wt\C_\theta$.
This simply follows from the Cauchy theorem since the singularities coming from $f_0$ at $-\log_q \mu q^k$, $-\log_q \nu q^k$ and $\log_q q^k$ for $k=0,1,2,\dots$
are by condition \eqref{munucond} all smaller than $\log_q2$ and the point of $\wt\C_\theta$ with the smallest real part is $\log_q(2-q^\theta)$.
One the other hand, the condition \eqref{thetacond} ensures that no pole coming from the sine in the denominator is crossed along the deformation,
since the real part of the points of $\wt\C_\theta$ is between $\log_q(2-q^\theta)$ and $\theta$ and the difference of the two is assumed to be less than $1$ by \eqref{thetacond}.

\emph{Step 2: Localization to short contours.}
In this step, we prove the statement of the proposition with $\varphi=\pi/2$ instead of $\varphi\in(0,\pi/2)$.
Recall from Definition~\ref{def:contours} that $s\mapsto w(s)$ parametrized the contour $\C_\theta$ and hence $s\mapsto\log_q w(s)$ parametrized the contour $\wt\C_\theta$.
It allows for writing the Fredholm determinant as
\begin{multline}\label{Fredholmseries}
\det(\id+K_x)_{L^2(\wt\C_\theta)}\\
=\sum_{k=0}^\infty \frac1{k!} \int_{-\pi}^\pi\d s_1\dots\int_{-\pi}^\pi\d s_k \det\left(K_x(\log_q w(s_i),\log_q w(s_j))\frac{\d}{\d s_i}\log_q w(s_i)\right)_{i,j=1}^k.
\end{multline}

The kernel in \eqref{Fredholmseries} diverges logarithmically in the neighbourhood of $s_i=0$, but it is bounded otherwise.
More precisely, there is a constant $C$ such that
\[|K_x(\log_q w(s),\log_q w(s'))|\le C(1+(\log|s|)_-+(\log|s'|)_-).\]
On the other hand, we use the fact that the contour $\wt\C_\theta$ is of steep descent for the function $W\mapsto-\Re(f_0(q^W))$ as an immediate consequence of Proposition~\ref{prop:steep1}.
Since this function gives the main contribution in the exponent in the $W$ variable, the kernel converges to $0$ exponentially as $N\to\infty$
for all $W\in\wt\C_\theta$ except for a $\delta$-neighbourhood of $\theta$.
Hence by dominated convergence, the integral along the contour $\wt\C_\theta$ in the Fredholm series of $K_x$ in \eqref{Fredholmseries} can be neglected apart from a $\delta$-neighbourhood of $\theta$
by making an error of order $\O(\exp(-c\delta^3 N))$ for some $c>0$.
Keeping the endpoints of the remaining contour, it can be replaced by $V_{\theta,\pi-\varphi}^\delta$ for some $\varphi\in(0,\pi/2)$ by Cauchy's theorem.
Note that in the last step, the orientation of the contour changes.

With a similar argument, we can localize the $Z$-contour as well.
By linearity, one can take out the $Z$ integrations from the determinant in \eqref{Fredholmseries} to obtain a sum where the $k$th term is a $2k$-fold integration.
It is still integrable, since the behaviour in the $Z$ variables is $e^{-\pi\Im(Z)}$ due to the sine in the denominator.
The function $\Re(f_0(q^Z))$ is periodic along the contour $\theta+\I\R$ with period $2\pi/|\log q|$ in the imaginary direction.
The contour $\{\theta+\I t:t\in[\pi/\log q,-\pi/\log q]\}$ is however of steep descent for $\Re(f_0(q^Z))$ by Proposition~\ref{prop:steep2}.
Therefore, the steep descent property and the periodicity implies that by making an exponentially small error in $N$, we can restrict the $Z$ integral to the set $\cup_{k\in\Z}I_k$ where
$I_k=\{\theta+\I t:t-2k\pi/\log q\in[-\delta,\delta]\}$, in particular, $I_0=V_{\theta,\pi/2}^\delta$.

Now we argue that in the limit, only the integral over $I_0$ survives.
Let us consider the change of variables
\[W=\theta+wN^{-1/3},\qquad Z=\theta+ik\frac{2\pi}{\log q}+zN^{-1/3}.\]
Now the term $N^{-1/3}/\sin(\pi(W-Z))$ for $k=0$ converges to $1/(\pi(w-z))$, whereas for $k\neq0$, we have
\[\frac{N^{-1/3}}{\sin(\pi(W-Z))}\asymp N^{-1/3}e^{-\pi|\Im Z|}\asymp N^{-1/3}e^{-\frac{2\pi^2}{|\log q|}|k|}\]
which is a summable but it is of smaller order than the term for $k=0$.
It means that the integral over $\cup_{k\in\Z}I_k$ can be replaced by the one over $I_0$ in the limit.
This proves the proposition for $\varphi=\pi/2$.

\emph{Step 3: Deformation of short contours.}
What we show in this step is that the contour for the $Z$ integral in the kernel $K_{x,\delta}$ that can be taken to be a segment from $\theta-\I\delta$ to $\theta+\I\delta$
can be replaced by $V_{\theta,\varphi}^\delta$ by possibly choosing a smaller $\delta$.
First with the $\varphi$ obtained in the localization of the $W$ contour and by using Cauchy's theorem, we replace the integration path for $Z$ by the union of
\begin{align*}
S_1&=[\theta-\I\delta,\theta+\delta\cos\varphi-\I\delta\sin\varphi],&
S_2&=[\theta+\delta\cos\varphi-\I\delta\sin\varphi,\theta],\\
S_3&=[\theta,\theta+\delta\cos\varphi+\I\delta\sin\varphi],&
S_4&=[\theta+\delta\cos\varphi+\I\delta\sin\varphi,\theta+\I\delta]
\end{align*}
where we mean four segments in the complex plane with the given endpoints.

The new integration path is not a steep descent contour any more, nevertheless one can proceed as follows.
The function $Z\mapsto f_0(q^Z)$ behaves around $\theta$ as $f(q^\theta)+(Z-\theta)^3$ in the leading order by the Taylor expansion \eqref{f0Taylor}.
On the other hand by Proposition~\ref{prop:steep2}, we know that the value of $\Re(f_0(q^Z))$ as $Z\in\theta+\I\R$ is smaller than $f_0(q^\theta)$.
Hence by taking $\delta$ sufficiently small so that the Taylor approximation works well enough, we can achieve that $\Re(f_0(q^Z))$ as $Z\in S_1\cup S_4$ is strictly smaller than $f_0(q^\theta)$.
Similarly to the second step of this proof, we can further reduce the integration path to $S_2\cup S_3=V_{\theta,\varphi}^\delta$ by making an exponentially small error in $N$.
This completes the proof.
\end{proof}

\begin{proof}[Proof of Proposition~\ref{prop:kernelconv}]
We apply the change of variables
\begin{equation}\label{localchov}
W=\theta+wN^{-1/3},\quad W'=\theta+w'N^{-1/3},\quad Z=\theta+zN^{-1/3}
\end{equation}
as in \eqref{defKxdelta} and use the Taylor expansions \eqref{f0Taylor}--\eqref{f2Taylor}.
It gives that up to an error $\O(N^{-1/3})$ in the exponent, the kernel $K_{x,\delta}^N(w,w')$ is close to $K_{x,\delta N^{1/3}}'(w,w')$
for any $w,w'\in V_{0,\pi-\varphi}^{\delta N^{1/3}}$ as $N\to\infty$.
By using the inequality $|e^x-1|\le|x|e^{|x|}$, we obtain that the kernel with and without the error in the exponent differ by an $\O(N^{-1/3})$ term.

In order to get that the Fredholm determinants are also close, we need a uniform fast decaying bound on $K_{x,\delta}^N(w,w')$.
The main term in the exponent is
\[-N\Re f_0(q^W)=-\chi\Re\frac{w^3}3+\O_\epsilon({N^{-1/3}}w^4)=-\chi\Re\frac{w^3}3+\O_\epsilon(w^3)\]
for any $\epsilon>0$ where $\O_\epsilon(w^3)$ means that the error term is at most $\epsilon w^3$.
By taking $\delta$ small enough, $\epsilon$ can be arbitrarily small, hence the error term is negligible compared to the cubic behaviour of $-\chi\Re w^3/3$.
The error terms coming from $f_1$ and $f_2$ in the exponent are similarly dominated.
Hence the difference of the Fredholm determinants goes to $0$ as $N\to\infty$ by dominated convergence, which proves the proposition.
\end{proof}

\begin{proof}[Proof of Proposition~\ref{prop:kernelextend}]
Since the integrand in \eqref{kernelK'} has cubic exponential decay in $w$ and $z$ along the given contours $V_{0,\pi-\varphi}^\infty$ and $V_{0,\varphi}^\infty$ respectively,
the convergence of the Fredholm determinants follows by dominated convergence similarly to the earlier proofs.
\end{proof}

\begin{proof}[Proof of Proposition~\ref{prop:rewritekernel}]
By definition \eqref{kernelK'}, we can write the kernel $K_{x,\infty}'$ on $L^2(V_{0,\pi-\varphi}^\infty)$ as
\begin{equation}\label{kernelAB}
K_{x,\infty}'(w,w')=(AB)(w,w')
\end{equation}
where
\begin{align*}
A(w,\lambda)&=e^{-\chi w^3/3+c\phi'w^2/2-\beta_x w+\lambda w}\\
B(\lambda,w')&=\frac1{2\pi\I}\int_{V_{0,\varphi}^\infty}\frac{\d z}{z-w'}e^{\chi z^3/3-c\phi'z^2/2+\beta_x z-\lambda z}
\end{align*}
with $\lambda\in\R_+$ and the composition $AB$ on the right-hand side of \eqref{kernelAB} is also meant in $L^2(\R_+)$.
The equality \eqref{kernelAB} can be seen since
\[\frac1{z-w}=\int_0^\infty\d\lambda\,e^{-\lambda(z-w)}\]
as long as $\Re(z-w)>0$.

Hence we can write
\[\det(\id-K_{x,\infty}')_{L^2(V_{0,\pi-\varphi}^\infty)}=\det(\id-AB)_{L^2(V_{0,\pi-\varphi}^\infty)}=\det(\id-BA)_{L^2(\R_+)}\]
with
\[(BA)(y,y')=\int_0^\infty\d\lambda\frac1{(2\pi\I)^2}\int_{V_{0,\pi-\varphi}^\infty}\d w\int_{V_{0,\varphi}^\infty}\d z
\frac{e^{\chi z^3/3-c\phi'z^2/2+(\beta_x-y-\lambda)z}}{e^{\chi w^3/3-c\phi'w^2/2+(\beta_x-y'-\lambda)w}}.\]
Using the general formula
\[\frac1{2\pi\I}\int_{V_{0,\varphi}^\infty}\exp\left(a\frac{z^3}3+bz^2+cz\right)\d z=a^{-1/3}\exp\left(\frac{2b^3}{3a^2}-\frac{bc}a\right)\Ai\left(\frac{b^2}{a^{4/3}}-\frac c{a^{1/3}}\right),\]
we get that
\[(BA)(y,y')=\chi^{-1/3}e^{-c\phi'(y-y')/(2\chi)}\int_0^\infty\d\lambda\Ai\left(\frac y{\chi^{1/3}}+x+\lambda\right)\Ai\left(\frac{y'}{\chi^{1/3}}+x+\lambda\right)\]
by \eqref{defbeta} and after the change of variable $\lambda\to\chi^{1/3}\lambda$.
After conjugation by the exponential prefactor and by rescaling the Fredholm determinant, we get that
\[\det(\id-BA)_{L^2(\R_+)}=\det(\id-K_{\Ai,x})_{L^2(\R_+)}=F_{\rm GUE}(x)\]
as required.
\end{proof}

\section*{Acknowledgements}
The author thanks Guillaume Barraquand and Ivan Corwin for stimulating discussions and comments related to the present work, and an anonymous referee for a detailed review.
This research was supported by the European Union and the State of Hungary, co-financed by the European Social Fund in the framework of T\'AMOP 4.2.4.\ A/1--11--1--2012--0001 National Excellence Program.
The author is grateful for the Postdoctoral Fellowship of the Hungarian Academy of Sciences and for the Bolyai Research Scholarship.
His work was partially supported by OTKA (Hungarian National Research Fund) grant K100473.

\end{document}